\newcommand{\R}{\ensuremath{\mathbb{R}}}
\newfont{\BB}{msbm10 scaled\magstep1}
\newfont{\bb}{msbm8}
\newcommand{\cS}{{\mathcal S}}
\newcommand{\cK}{{\mathcal K}}
\newcommand{\C}{{\mathbb C}}
\newcommand{\cE}{{\mathcal E}}
\newcommand{\E}[0]{\mathbbm{E}}
\newcommand{\N}[0]{\mathbbm{N}}
\newcommand{\1}[0]{\mathbbm 1}
\newtheorem{theorem}{Theorem}
\newtheorem{problem}{Problem}
\newtheorem{definition}{Definition}
\newtheorem{proposition}{Proposition}
\newtheorem{example}{Example}
\newtheorem*{remark}{Remark}
\newcommand{\I}{\mathfrak{I}}
\renewcommand{\phi}{\varphi}
\def\D{\mathbb{D}}
\newcommand{\De}{\mathrm{d}}
\newtheorem*{claim}{Claim}
\newcommand*{\myproofname}{Proof}
\newenvironment{proofclaim}[1][\myproofname]{\begin{proof}[#1]}{\end{proof}}
\newtheorem{theoremR}{Theorem}
\newenvironment{customlemma}[1]
  {\innercustomlemma}
  {\endinnercustomlemma}
\newtheorem{innercustomobs}{}
\title{
Traversing the Schr\"odinger Bridge strait:\\ Robert Fortet's marvelous proof redux}
\author[1]{Montacer Essid}
\author[2]{Michele Pavon}
\affil[1]{Courant Institute of Mathematical Sciences, NYU}
\affil[2]{Dipartimento di Matematica ``Tullio Levi-Civita", Universit\`a di Padova}
\date{\today}
\begin{document}
\maketitle

\begin{abstract} In the early 1930's, Erwin Schr\"odinger, motivated by his quest for a more classical formulation of quantum mechanics, posed a large deviation problem for a cloud of independent Brownian particles. He showed that the solution to the problem could be obtained trough a system of two linear equations with nonlinear coupling at the boundary ({\em Schr\"odinger system}). Existence and uniqueness for such a system, which represents a sort of bottleneck for the problem,  was first established by R. Fortet in 1938/40 under rather general assumptions by proving convergence of an ingenious but complex approximation method. It is the first  proof of what  are nowadays called Sinkhorn-type algorithms in the much more challenging continuous case. Schr\"odinger bridges are also an early example of the maximum entropy approach and have been more recently recognized as a regularization of the important Optimal Mass Transport problem.

Unfortunately, Fortet's contribution is by and large ignored in contemporary literature. This is likely due to the complexity of his approach coupled with an idiosyncratic exposition style and to missing details and steps in the proofs. Nevertheless, Fortet's approach maintains its importance to this day as it provides the only existing algorithmic proof, in the continuous setting, under rather mild assumptions. It can be adapted, in principle, to other relevant optimal transport problems.
It is the purpose of this paper to remedy this situation by rewriting the bulk of his paper with all the missing passages and in a transparent fashion so as to make it fully available to the scientific community. We consider the problem in $\R^d$ rather than $\R$ and use as much as possible his notation to facilitate comparison. 
\end{abstract}

\section{Introduction}\label{intro}
In 1931/32, Erwin Schr\"odinger showed that the solution to a hot gas experiment (large deviations problem) could be reduced to establishing existence and uniqueness of a pair  of positive functions $(\varphi,\hat{\varphi})$ satisfying what was later named the Schr\"odinger system, see (\ref{eq:Schronestep}) below. This is a system of two linear PDE's with nonlinear coupling at the boundary. Besides Schr\"odinger's original motivation, this problem features two more:  The first is a {\em maximum entropy principle in statistical inference}, namely choosing a posterior distribution so as to make the fewest number of assumptions about what is beyond the available information. This inference method has been noticeably developed over the years by Jaynes, Burg, Dempster and Csisz\'{a}r \cite{Jaynes57,Jaynes82,BURG1,BLW,Dem,csiszar0,csiszar1,csiszar2}. The second, more recent, is regularization of the Optimal Mass Transport problem \cite{Mik,mt1,MT,leo,leo2,CGP2016,PC} providing an effective computational approach to the latter; see, e.g., \cite{Cuturi,BCCNP,CGP5,CGP9}.

 The first proof of existence and uniqueness for the Schr\"odinger system was provided in 1938/40 by the French analyst Robert Fortet \cite{For0,For}. Subsequent significant contributions are due to Beurlin (1960), Jamison (1975), Zambrini (1986) and F\"ollmer (1988) \cite{Beu,Jam2,Zam,F2}. Fortet's proof is algorithmic, being based on a complex iterative scheme. It represents also the first proof, in the much more challenging continuous setting, of convergence of a procedure (called {\em iterative proportional fitting} (IPF)) proposed by  Deming and Stephan \cite{DS1940} (1940) for contingency tables. In the latter discrete setting, the first convergence proof was provided in a special case  some twenty five years after Fortet and Deming-Stephan by R. Sinkhorn \cite{Sin64}, who was unaware of their work. These iterative schemes are nowadays often called {\em Sinkhorn-type algorithms} or {\em Iterative Bregman projections}; cf., e.g., \cite{Cuturi,BCCNP,CPSV}.   Unfortunately, in spite of its importance, Fortet's contribution has by and large sunk into oblivion. This is arguably  due to the complexity of his approach, to the unconventional organization of the paper and to a number of gaps in his arguments. 
 Nonetheless, to this day, Fortet's existence result is the central one as it is based on the convergence of an algorithm under rather weak assumptions and does not require a kernel bounded away from zero. Other proofs in the continuous setting \cite{Beu,Jam2,F2}, \cite[Section 2]{leo} are non constructive except  \cite{CGP9}. The latter proof, however, assumes compactly supported marginal distributions. Finally, Fortet's approach may, in principle, be taylored to attack  other significant optimal transport problems. 

 The purpose of this paper is to make his fundamental contribution fully available to the scientific community. To achieve this, we review, elaborate upon and generalize to $\R^d$ Fortet's proof of existence and uniqueness for the Schr\"odinger system. We systematically fill in all the missing steps and provide thorough explanations of the rationale behind different articulations of his approach, but keep as much as possible his original notation to make comparison simpler. Nevertheless, we have chosen to reorganize the paper to improve its readability since, for instance, Fortet often presents the proof before the statement of the result.  Finally, our original work, completing a sketchty proof, or proving Fortet's claims or making explicit what is implicit in \cite{For}, appears in a sequence of Propositions, Observations and one Claim (all not present in \cite{For}) to make it easily identifiable.
 
 The paper is organized as follows: In the next two sections, we provide a concise introduction to the Schr\"odinger bridge problem which is not present  in \cite{For}. We include, for the benefit of the reader, Schr\"odinger's original motivation, elements of the transformation of the large deviation problem into a maximum entropy problem and a derivation of the Schr\"odinger system. Section \ref{statement} features Fortet's statement of the problem and his basic assumptions. Section \ref{Existence I} is devoted to his first existence theorem. In Section \ref{Existence II}, a special case of his second existence theorem is stated and his uniqueness result is proved. In Section \ref{Hilbert}, Fortet's approach is compared with the one based on contracting the Hilbert Metric. The conclusions section discusses relation of his method  to subsequent work and an outlook on potential extensions, interpretations and applications of Fortet's approach.

\section{The Hot Gas Gedankenexperiment}
In 1931-32, Erwin Schr\"{o}dinger considered the following thought experiment \cite{S1,S2}: A cloud of $N$ independent Brownian particles is evolving in time in $\R^3$. Suppose that at $t=0$ the empirical distribution is $\rho_0(x)dx$ and at $t=1$ it is  $\rho_1(x)dx$. 
If $N$ is large, say of the order of Avogadro's number, we expect, by the law of large numbers,
$$\rho_1(y)\approx \int_{\R^3}p(0,x,1,y)\rho_0(x)dx,
$$
where
\begin{equation}\label{transitiondensity}p(s,y,t,x)=\left[2\pi(t-s)\right]
^{-\frac{3}{2}}\exp\left[-\frac{|x-y|^2} {2(t-s)}\right],\quad s<t
\end{equation}
is the transition density of the Wiener process. If this is not the case, the particles have been transported in an unlikely way. But of the many unlikely ways in which this could have happened, which one is
the most likely? In modern probabilistic terms, this is a problem of {\em large deviations of the empirical distribution} as observed by F\"ollmer \cite{F2}. 
The area of large deviations is concerned with the probabilities of very rare events. Thanks to Sanov's theorem \cite{SANOV}, Schr\"odinger's problem can be turned into a maximum entropy problem for distributions on trajectories. Let $\Omega=C([0,1];\R^d)$ be the space of $\R^d$ valued continuous functions and let $X^1, X^2,\ldots$ be i.i.d. Brownian evolutions on $[0,1]$ with values in $\R^d$  ($X_i$ is distributed according to the Wiener measure $W$ on $C([0,1];\R^d)$) with initial marginal $\rho_0(x)dx$ . The {\em empirical distribution} $\mu_N$ associated to $X^1,X^2,\ldots X^N$ is defined by
\begin{equation}\label{emp}\mu_N:=\frac{1}{N}\sum_{i=1}^N\delta_{X^i}.
\end{equation}
Notice that (\ref{emp}) defines a map from $\Omega^N$ to the space ${\cal D}$ of probability distributions on $C([0,1];\R^d)$. Hence, if $E$ is a subset of ${\cal D}$, it makes sense to consider $W^N(\mu_N\in E)$. By the law of large numbers for empirical measures, see e.g. \cite[Theorem 11.4.1]{dudley}, the distributions $\mu_N$ converge weakly
\footnote{\linespread{1}\small Let ${\cal V}$ be a metric space and ${\cal D}({\cal V})$ be the set of probability measures defined on ${\cal B}({\cal V})$, the Borel $\sigma$-field of ${\cal V}$. We say that a sequence $\{P_N\}$ of elements of ${\cal D}({\cal V})$ converges weakly to $P\in {\cal D}({\cal V})$, and write  $P_N\Rightarrow P$, if $\int_{\cal V}fdP_N\rightarrow\int_{\cal V}f dP$ for every bounded, continuous function $f$ on ${\cal V}$.} to $W$ as $N$ tends to infinity. Hence, if $W\not\in E$, we must have $W^N(\mu_N\in E)\searrow 0$. Large deviation theory, see e.g. \cite{ellis,DZ}, provides us with a much finer result: Such a decay is {\em exponential} and the exponent may be characterized solving a {\em maximum entropy problem}. Indeed, in our setting, let $E={\cal D}(\rho_0,\rho_1)$, namely distributions on $C([0,1];\R^d)$ having marginal densities $\rho_0$ and $\rho_1$ at times $t=0$ and $t=1$, respectively. Let 
\[ \D(P\| W) = \begin{cases} \E_{P}\left( \log \frac{\De P}{\De W} \right), \quad & \mbox{if $ P \ll W$},\\+ \infty \quad & \mbox{otherwise} \end{cases} \]
be the relative entropy functional or Kullback-Leibler divergence between $P$ and $W$.
Then, a consequence of Sanov's theorem,  asserts that if the "prior" $W$ does not have the required marginals, the sequence
$$W^N\left[\frac{1}{N}\sum_{i=1}^N\delta_{X^i}\in\cdot\right]
$$
satisfies a large deviation principle with rate function $\D(\cdot\|W)$. This is often abbreviated as follows:
The probability of observing an empirical distribution $\mu_N$ in ${\cal D}(\rho_0,\rho_1)$ decays according to 
$$W^N\left(\frac{1}{N}\sum_{i=1}^N\delta_{X^i}\in{\cal D}(\rho_0,\rho_1)\right)\sim\exp\left[-N \inf\left\{\D(P\|W); P\in{\cal D}(\rho_0,\rho_1)\right\} \right]. 
$$
Thus, the most likely random evolution between two given marginals is the solution of the Schr\"odinger Bridge Problem:
\begin{problem}\label{bridge}
\begin{equation}{\rm Minimize}\quad \D(P\|W) \quad {\rm over} \quad P\in{\cal D}(\rho_0,\rho_1).
\end{equation} 
\end{problem}
\noindent
The optimal solution is called the \emph{Schr\"odinger bridge} between $\rho_0$ and $\rho_1$ over $W$, and  its marginal flow $(\rho_t)$ is the \emph{entropic interpolation}.

Let $P\in\cal D$ be a finite-energy diffusion, namely under $P$ the canonical coordinate process $X_t(\omega)=\omega(t)$ has a (forward) Ito differential
\begin{equation}\label{fordiff}dX_t=\beta_tdt+dW_t
\end{equation}
 where $\beta_t$ is adapted to $\{{\cal F}^-_t\}$ (${\cal F}^-_t$ is the $\sigma$-algebra of events observable up to time $t$) and
 \begin{equation}\label{finenergy}\E_P\left[\int_0^1\|\beta_t\|^2dt\right]<\infty.
 \end{equation}
 Let
 \[P_x^y=P\left[\,\cdot\mid X_0=x,X_1=y\right],\quad W_x^y=W\left[\,\cdot\mid X_0=x,X_1=y\right]
\]
be the disintegrations of $P$ and $W$ with respect to the initial and final positions. Let also $\pi$ and  $\pi^{W}$
be the joint initial-final time distributions under $P$ and $W$, respectively. Then, we have the following decomposition of the relative entropy \cite{F2}
\begin{eqnarray}
&&\D(P\|W)=E_P\left[\log\frac{dP}{dW}\right]=\nonumber\\&&\iint\left[\log\frac{\pi(x,y)}{\pi^{W}(x,y)}\right]\pi(x,y)dxdy+\iint\left(\log\frac{dP^{y}_{x}}{dW^{y}_{x}}\right)dP^y_x \pi(x,y)dxdy.\label{decomposition}
\end{eqnarray}
Both terms are non-negative. We can make the second zero by choosing $P^{y}_{x}=W^{y}_{x}$.  Thus, the problem reduces to the static one
\begin{problem}\label{static}
Minimize over densities $\pi$ on $\R^d\times\R^d$ the index
\begin{equation}\label{staticindex}
\D(\pi\|\pi^{W})=\iint\left[\log\frac{\pi(x,y)}{\pi^{W}(x,y)}\right]\pi(x,y)dxdy
\end{equation}
subject to the (linear) constraints
\begin{equation}\label{constraint} \int \pi(x,y)dy=\rho_0(x),\quad \int \pi(x,y)dx=\rho_1(y).
\end{equation}
\end{problem}
If $\pi^*$ solves the above problem, then
$$P^*(\cdot)=\int_{\R^d\times\R^d} W_{xy}(\cdot)\pi^*(x,y)dxdy,
$$
solves Problem \ref{bridge}.

Consider now the case when the prior is $W_\epsilon$, namely Wiener measure with  variance $\epsilon$, so that
\[p(0,x,1,y)=\left[2\pi\epsilon\right]
^{-\frac{d}{2}}\exp\left[-\frac{|x-y|^2} {2\epsilon}\right].
\]
Using $\pi^{W_\epsilon}(x,y)=\rho_0(x)p(0,x;1,y)$ and the fact that the quantity
\[\iint\left[\log\rho_{0}(x)\right]\pi(x,y)dxdy=\int\left[\log\rho_{0}(x)\right]\rho_{0}(x)dx
\]
is independent of $\pi$ satisfying (\ref{constraint})\footnote{\linespread{1}\small The initial marginal of the prior measure, as long as  $\rho_0(x)dx$ is at finite relative entropy from it, does not play any role in the optimization problem. Instead of $\rho_0(x)dx$, which is the standard case in control problems, another popular choice is Lebesgue measure so that the prior is an unbounded measure called {\em stationary Wiener measure}, see e.g. \cite{leo}.}, we get
\begin{eqnarray}\nonumber
\D(\pi\|\pi^{W_\epsilon})&=&-\iint\left[\log\pi^{W_\epsilon}(x,y)\right]\pi(x,y)dxdy+\iint\left[\log\pi(x,y)\right]\pi(x,y)dxdy\\&=&\iint\frac{|x-y|^2}{2\epsilon}\pi(x,y)dxdy-{\cal S}(\pi)+C,
\end{eqnarray}
where ${\cal S}$ is the differential entropy and $C$ does not depend on $\pi$. Thus, Problem \ref{static} of minimizing $\D(\pi\|\pi^{W_\epsilon})$ over $\Pi(\rho_0,\rho_1)$, namely the  ``couplings" of $\rho_0$ and $\rho_1$\footnote{Probability densities on  $\R^n\times \R^n$  with marginals $\rho_0$ and $\rho_1$.}, is equivalent to
\begin{equation}\label{eq:regularizedOT}
 \inf_{\pi \in \Pi(\rho_0,\rho_1)}  \int\frac{|x-y|^2}{2} \pi(x,y) \De x\De y +  \epsilon \int\pi(x,y) \log \pi(x,y) \De x\De y,
\end{equation}
namely a regularization of Optimal Mass Transport (OMT) \cite{Vil} with quadratic cost function obtained by subtracting a term proportional to the entropy.

\section{Derivation of the Schr\"{o}dinger System}\label{SchrSystem}
We outline the derivation of the  Schr\"odinger system for the sake of continuity in exposition. Two good surveys on Schr\"odinger Bridges are \cite{Wak,leo}.
The Lagrangian function for Problem \ref{static} has the form
\begin{eqnarray}\nonumber&&{\cal L}(\pi;\lambda,\mu)= \iint\left[\log\frac{\pi(x,y)}{\pi^{W}(x,y)}\right]\pi(x,y)dxdy\\&&+\int \lambda(x)\left[\int \pi(x,y)dy-\rho_0(x)\right]+\int \mu(y)\left[\int \pi(x,y)-\rho_1(y)\right].\nonumber
\end{eqnarray}
Setting the first variation with respect to $\pi$ equal to zero, we get the (sufficient) optimality condition
$$1+\log \pi^*(x,y)-\log p(0,x,1,y)-\log \rho_0(x)+\lambda(x)+\mu(y)=0,
$$
where we have used the expression $\pi^{W}(x,y)=\rho_0(x)p(0,x,1,y)$ with $p$ as in (\ref{transitiondensity}). We get
\begin{eqnarray*}
\frac{\pi^*(x,y)}{p(0,x,1,y)}&=&\exp\left[\log \rho_0(x)-1-\lambda(x)-\mu(y)\right]\\
&=&\exp\left[\log \rho_0(x)-1-\lambda(x)\right]\exp\left[-\mu(y)\right].
\end{eqnarray*}
Hence,  the ratio $\pi^*(x,y)/p(0,x,1,y)$ factors into a function of $x$ times a function of $y$. Denoting these by $\hat{\varphi}(x)$ and $\varphi(y)$, respectively, we can then write the optimal $\pi^*(\cdot,\cdot)$ in  the form 
\begin{equation}\label{optimaljoint} \pi^*(x,y)=\hat{\varphi}(x) p(0,x,1,y)\varphi(y),
\end{equation} where $\varphi$ and $\hat{\varphi}$ {must satisfy}
\begin{eqnarray}\hat{\varphi}(x)\int p(0,x,1,y)\varphi(y)dy&=&\rho_0(x),\label{opt1}\\
\varphi(y)\int p(0,x,1,y)\hat{\varphi}(x)dx&=&\rho_1(y).\label{opt2}
\end{eqnarray}
Let us define $\hat{\varphi}(0,x)=\hat{\varphi}(x)$, $\quad \varphi(1,y)=\varphi(y)$ and  
$$\hat{\varphi}(1,y):=\int p(0,x,1,y)\hat{\varphi}(0,x)dx,\quad \varphi (0,x):=\int p(0,x,1,y)\varphi(1,y).
$$
Then, (\ref{opt1})-(\ref{opt2}) can be replaced by the system
\begin{subequations}\label{eq:Schronestep}
\begin{eqnarray}\label{eq:SchronestepA}
&&\hat{\varphi}(1,y)=\int p(0,x,1,y)\hat{\varphi}(0,x)dx,\\\label{SchronestepB}\quad &&\varphi (0,x)=\int p(0,x,1,y)\varphi(1,y)dy,\\\label{eq:SchronestepC}&&\varphi(0,x)\cdot\hat{\varphi}(0,x)=\rho_0(x),\\\label{eq:SchronestepD}&&\varphi(1,y)\cdot\hat{\varphi}(1,y)=\rho_1(y).
\end{eqnarray}
\end{subequations}
The arguments leading to (\ref{eq:Schronestep}) apply to the much more general case where the prior measure on path space is not Wiener measure but any finite energy diffusion measure $\bar{P}$ \cite{F2}. In that case, $p(0,x,1,y)$ is the transition density of $\bar{P}$. As already said, the question of existence and uniqueness of  positive functions $\hat{\varphi}$, $\varphi$ satisfying (\ref{eq:Schronestep}), left open by Schr\"{o}dinger, is a highly nontrivial one and was settled in various degrees of generality by Fortet, Beurlin, Jamison, F\"{o}llmer and L\'eonard \cite{For,Beu,Jam2,F2,leo}. The pair $(\varphi,\hat{\varphi})$ is unique up to multiplication of $\varphi$ by a positive constant $c$ and division of $\hat{\varphi}$ by the same constant. A proof based on convergence of an iterative scheme in Hilbert's projective metric (convergence of rays in a suitable cone) was provided in \cite{CGP9} in the case when both marginals have compact support.

At each time $t$, the marginal $\rho_t$ factorizes as
\begin{equation}\label{factorization}\rho_t(x)=\varphi(t,x)\cdot\hat{\varphi}(t,x).
\end{equation}
Schr\"odinger saw ``Merkw\"{u}rdige Analogien zur Quantenmechanik, die mir sehr des Hindenkens wert erscheinen"\footnote{Remarkable analogies to quantum mechanics which appear to me very worth of reflection.} Indeed (\ref{factorization}) resembles Born's relation
\[\rho_t(x)=\psi(t,x)\cdot\bar{\psi}(t,x)
\]
with $\psi$ and $\bar{\psi}$ satisfying two adjoint equations like $\varphi$ and $\hat{\varphi}$. Moreover, the solution of Problem \ref{bridge} exhibits the following remarkable {\em reversibility property}: Swapping the two marginal densities $\rho_0$ and $\rho_1$, the new solution is simply the time reversal of the previous one, cf. the title ``On the reversal of natural laws" of \cite{S1}.

We mention, for the benefit of the reader, that there exist also dynamic versions of the problem such as stochastic control formulations  originating with \cite{DP,DPP,PW,Mikami}. These formulations are particularly relevant in applications where the prior distribution on paths  is  associated to the uncontrolled ({\em free}) evolution of a dynamical system, see e.g \cite{CGP1,CGP3,CGPcooling} and in image morphing/interpolation \cite[Subsection 5.3]{CGP9}.
 The stochastic control problems leads directly to a fluid dynamic formulation, see \cite{leo,CGP2016}. The latter can be viewed as a regularization of the Benamou-Brenier dynamic formulation of Optimal Mass Transport \cite{BB}.

\section{Fortet's Statement of the Problem}\label{statement}
Let $d \in \N^*$. Define by $\mathcal{B}(\I)$ the Borel $\sigma$-algebra of $\I \subseteq \R^d$, and $m$ the Lebesgue measure on $\I$. Almost everywhere (a.e.) will always be intended with respect to $m$.  In this paper, measurable functions with respects to the Borel $\sigma$-algebra on their corresponding interval of definition will simply be referred to as measurable. 
Moreover, all properties concerning measures of sets will (tacitly) refer to their Lebesgue measure. From here on, we shall try to adhere to Fortet's notation as much as possible. In particular, with respect to the notation employed in Section \ref{intro}, the following changes are made: The two marginal densities $\rho_0(x)$ and $\rho_1(y)$ are replaced by $\omega_1(x)$ and $\omega_2(y)$, respectively. The kernel (transition density) $p(0,x,1,y)$ is replaced by $g(x,y)$. Finally, the pair $(\hat{\varphi}(x),\varphi(y))$ is replaced by the pair $(\phi(x),\psi(y))$.\\

Let $\I^1,\I^2 \subseteq \R^d$ be closed sets with non-empty interior, but not necessarily bounded.

Let $\omega_1: \I^1 \to \R,\omega_2: \I^2 \to \R $ and $g: \I^1 \times \I^2 \to \R$ satisfying the \textbf{assumptions (H)}:
%
\begin{enumerate}[label=(H.\roman*)]
\item \label{Hi} $g(x,y) \geq 0, \quad \forall x \in \I^1, \forall y \in \I^2$;
\item \label{Hii} $\omega_1(x) \geq 0,\; \omega_2(y) \geq 0, \quad \forall x \in \I^1,\forall y \in \I^2$;
\item \label{Hiii} ${\displaystyle \int_{\I^1} \omega_1(x)dx = \int_{\I^2} \omega_2(y) dy} = 1$;
\item \label{Hiv} $g$ is continuous;
\item \label{Hv} There exists $\Sigma>0$ such that $g(x,y)< \Sigma$, $\forall x \in \I^1, y \in \I^2$;
\item \label{Hvi} $\forall x \in \I^1$, $y \mapsto g(x,y)$ vanishes only on a set of measure $0$ in $\I^2$;
\item \label{Hvii} $\forall y \in \I^2$, $x \mapsto g(x,y)$ vanishes only on a set of measure $0$ in $\I^1$;
\item \label{Hviii} $\omega_1$ and $\omega_2$ are continuous.
\end{enumerate}
Notice that in Fortet's paper, \ref{Hi}-\ref{Hiii} are denoted Hypothesis I \cite[p.83]{For}, whereas hypotheses \ref{Hiv}-\ref{Hviii} are called Hypothesis II a) and b) \cite[p.85]{For}.

We are seeking a solution $(\phi,\psi)$ of the following Schr\"odinger system of equations (S):
\begin{equation} \label{SchrodingerSystem}
\begin{cases}
{\displaystyle \phi(x) \int_{\I^2} g(x,y)\psi(y) dy} = \omega_1(x), \\\\
{\displaystyle \psi(y) \int_{\I^1} g(x,y)\phi(x) dx} = \omega_2(y),
\end{cases} \tag{S}
\end{equation}
cf. system \eqref{opt1}-\eqref{opt2}.

\section{First Existence Theorem}\label{Existence I}
\subsection{Theorem I}
\begin{theoremR} {\em \cite[p.96]{For}}\label{thm:ex1}
Assume \emph{(H)}, as well as the condition:
\begin{equation}\label{cond:thm1}
\int_{\I^2} \frac{\omega_2(y)}{ \left[{\displaystyle \int_{\I^1} g(z,y) \omega_1(z) dz}\right] } dy < +\infty \tag{$\star$}
\end{equation}

Then:
\begin{enumerate}[label=\roman*)]
\item System \eqref{SchrodingerSystem} admits a solution $(\phi,\psi)$;
\item $\phi$ is non-negative and continuous;
\item $\phi$ vanishes only if  $x \in \I^1$ is such that $\omega_1(x) = 0$;
\item $\psi$ is measurable and non-negative;
\item $\psi$ vanishes, up to a zero measure set, only for $y \in \I^2$ such that $\omega_2(y) = 0$. 
\end{enumerate}
\end{theoremR}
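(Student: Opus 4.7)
My plan is to construct $(\phi,\psi)$ as the fixed point of a Sinkhorn/IPF-type iteration: initialize $\phi_0 := \omega_1$ and define alternately
\[
\psi_n(y) := \frac{\omega_2(y)}{\int_{\I^1} g(x,y)\phi_n(x)\,dx},\qquad
\phi_{n+1}(x) := \frac{\omega_1(x)}{\int_{\I^2} g(x,y)\psi_n(y)\,dy}.
\]
After each update the corresponding equation of \eqref{SchrodingerSystem} is satisfied by the current pair. The hypothesis $(\star)$ enters precisely here: it is exactly the statement that $\psi_0$ is integrable on $\I^2$, which makes the first update of $\phi$ finite, and combined with \ref{Hv} this propagates forward as a uniform $L^1$ bound on every $\psi_n$.

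First I would verify that the iteration is well defined and preserves regularity. By \ref{Hvi}--\ref{Hvii} the denominators can vanish only on negligible sets, and only where the numerators also vanish, so the pointwise ratios are unambiguously defined on the support of $\omega_i$. Continuity of $\phi_n$ follows inductively from continuity of $\omega_1$, $g$ and dominated convergence in the parameter $x$, using \ref{Hiv}--\ref{Hv} and the $L^1$-bound on $\psi_n$; non-negativity is immediate. I would then look for a monotone object governing the dynamics---most plausibly the sequence of ratios $\phi_{n+1}/\phi_n$, or the $y$-marginal $y\mapsto\int g(x,y)\phi_n(x)\,dx$---so that genuine pointwise monotone convergence is available, rather than just subsequential convergence through Arzel\`a--Ascoli (which would fail to deliver both equations simultaneously in the continuous, possibly unbounded setting).

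Once pointwise convergence of $\phi_n\to\phi$ and $\psi_n\to\psi$ is secured, I would pass to the limit in both equations of \eqref{SchrodingerSystem} by monotone or dominated convergence, justified by \ref{Hv} and the inherited $L^1$ control. The qualitative conclusions (ii)--(v) then fall out of the construction: continuity of $\phi$ follows from uniform control of its defining integral on compacta in $x$; measurability and non-negativity of $\psi$ are automatic; the zero sets of $\phi$ and $\psi$ are forced to coincide with those of $\omega_1$ and $\omega_2$ by \ref{Hvi}--\ref{Hvii}, since their defining integrals are strictly positive wherever $\omega_1,\omega_2>0$. The real obstacle, and the core of Fortet's contribution, is the convergence of the \emph{entire} iteration (not merely a subsequence) in the continuous setting with $g$ allowed to approach $0$; this is exactly where $(\star)$ must be used quantitatively to rule out mass escaping to infinity, and where any soft compactness argument breaks down.
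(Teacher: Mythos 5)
Your iteration is, after the substitution $h_n=\omega_1/\phi_n$, exactly the map $\Omega$ of \eqref{eq:h'} applied repeatedly from $h_0\equiv 1$, and you correctly identify that \eqref{cond:thm1} makes the first update finite. The genuine gap is the step you defer: ``look for a monotone object governing the dynamics.'' For the raw iteration no such pointwise monotone object exists. $\Omega$ is isotone and degree-one homogeneous (Proposition \ref{prop:MonotoneOmega}), but since $\Omega(1)$ is in general neither $\le 1$ nor $\ge 1$ everywhere, isotonicity induces no ordering between successive iterates, and $\Omega^n(1)$ need not be monotone in $n$; the paper's second case (where for every $n$ the set $\{H_n'>1\}$ has positive measure) shows this situation must be faced. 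Worse, the naive iteration need not stay well defined: by the \ref{lemma:classC}, the image of a class-(C) function (Definition \ref{def:classC}) is only bounded below on compacta and only finite a.e.\ on $\{\omega_1>0\}$, so already at the second step the inner integral $\int_{\I^1} g(z,y)\,\omega_1(z)/h_1(z)\,dz$ may be infinite on a large set and the subsequent ratios degenerate. This is precisely why Fortet does not iterate $\Omega$ directly but uses the truncated scheme \eqref{approxscheme}: the $\max(\cdot,1/n)$ keeps each $H_n$ in class (C) so that $\Omega$ can be applied, and the $\min(H_1,\cdot)$ is what manufactures the monotone decrease $H_{n+1}\le H_n$, $H_{n+1}'\le H_n'$ (Proposition \ref{lemma:monotonescheme}). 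Your proposal offers no substitute for this device, and dominated convergence or Arzel\`a--Ascoli cannot replace it, as you yourself note.

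Even granting convergence of the iterates, identifying the limit as a fixed point and proving (iii) and (v) requires machinery absent from your outline. The paper's engine is the normalization identity $\int_{\I^1}\bigl(H'(x)/H(x)\bigr)\,\omega_1(x)\,dx=1$ (point (iv) of the \ref{lemma:classC}), which forces equality from one-sided comparisons; it is what shows that either the scheme reaches a fixed point of $\Omega$ after finitely many steps (first case, via the auxiliary functions $K_p$) or that the exceptional set $J$ where the limit exceeds $1$ has measure zero (Proposition \ref{prop:Jmeas0}), after which $H'=\Omega(H')$ follows by monotone convergence. Strict positivity of the limit---needed so that $\phi=\omega_1/h$ is finite and vanishes only where $\omega_1$ does---is not ``automatic from \ref{Hvi}--\ref{Hvii}'': it is Proposition \ref{prop:HH'0}, where \eqref{cond:thm1} is used a second time, quantitatively, to show that if $H'$ vanishes at a single point then $H_n'\to 0$ uniformly (via total-variation convergence of the measures $\omega_2/G(H_n,\cdot)\,dy$ and absolute continuity of $\omega_2/G(H_1,\cdot)\,dy$), which would throw one back into the first case. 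As it stands, your text is a plausible plan that locates the right map and the right role of \eqref{cond:thm1} at step one, but its central convergence mechanism and the fixed-point identification are missing rather than proved.
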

\subsection{Application: the Bernstein Case}
Consider the case where $\I^1 = \I^2 = \R$ ($d=1$), and we have Gaussian marginals and transition kernel:
\[
\omega_1(x) = \frac{1}{\sqrt{2 \pi \sigma_1^2}} e^{-x^2/2\sigma_1^2}, \quad \omega_2(y) = \frac{1}{\sqrt{2 \pi \sigma_2^2}} e^{-y^2/2\sigma_2^2}, \quad g(x,y) = \frac{1}{\sqrt{2 \pi \sigma^2}} e^{-(y-x)^2/2 \sigma^2}
\]
for $\sigma_1,\sigma_2, \sigma >0$.

Then the integrand in \eqref{cond:thm1} is:
\[
\frac{\omega_2(y)}{ \left[{\displaystyle \int_{\R} g(z,y) \omega_1(z) dz}\right] } = \frac{\sqrt{\sigma^2 + \sigma_1^2}}{\sigma_2^2} e^{- y^2 \frac{\sigma^2+\sigma_1^2 - \sigma_2^2}{\sigma_2(\sigma_1+\sigma)} }
\]
which is integrable if and only if $\sigma^2+\sigma_1^2 - \sigma_2^2 > 0$. If $\sigma_1 \geq \sigma_2$, this is true and one can apply Theorem \ref{thm:ex1}. If it is not the case, exchange the roles of $\omega_1$ and $\omega_2$ to satisfy condition \eqref{cond:thm1}, and apply the theorem.
Hence up to exchanging the marginals, one can always show existence of a solution to the system \eqref{SchrodingerSystem} in the Bernstein case.

Consider now the case $\I^1 = \I^2 = \R^d$, $d>1$, and   
\[
\omega_i(x) = \frac{1}{(2 \pi |\Sigma_i|)^{d/2}} e^{-x^T \Sigma_i^{-1} x/2}, \quad i=1,2, \quad g(x,y) = \frac{1}{(2 \pi |\Sigma|)^{d/2}} e^{-(y-x)^T \Sigma^{-1} (y-x)/2}
\]
for some symmetric, positive definite matrices $\Sigma,\Sigma_1,\Sigma_2$. Then 
\[
\frac{\omega_2(y)}{ \left[{\displaystyle \int_{\R} g(z,y) \omega_1(z) dz}\right] } = \frac{|\Sigma+\Sigma_1|^{d/2}}{|\Sigma_2|^{d/2}} e^{- y^T [\Sigma_2^{-1} - (\Sigma+\Sigma_1)^{-1}] y/2 }
\]
which is integrable if and only if the eigenvalues of $\Sigma_2^{-1} - (\Sigma+\Sigma_1)^{-1}$ are positive.
Hence on $\R^d$, a sufficient condition for the existence and uniqueness of a solution to the system \eqref{SchrodingerSystem} is that the eigenvalues of  $\Sigma_2^{-1} - (\Sigma+\Sigma_1)^{-1}$ or $\Sigma_1^{-1} - (\Sigma+\Sigma_2)^{-1}$ are positive.

\subsection{Proof of Theorem I}
Assume (H) and \eqref{cond:thm1} true.\\

The proof introduced by Fortet heavily relies on various monotonicity properties of an iterative scheme. The architecture of the proof is as follows:
\begin{enumerate}[label = Step \arabic*)]
\item The problem is first reduced to proving an equivalent statement;
\item A proper functional space for the iteration scheme is defined;.
\item The iteration scheme is introduced. Its monotonicity properties are established;
\item Two separate cases are identified. In the first case, the iteration scheme converges in a finite number of steps. The existence of a fixed point solution to the problem is then deduced;
\item In the second case, the existence of a fixed point solution to the problem is also proved.
\end{enumerate}

\subsubsection{Step 1: Preliminary Reduction \cite[pp. 86-87]{For}}
Note that system \eqref{SchrodingerSystem} is equivalent to the following system:
\begin{equation}\label{SchrodingerSystem2}
\begin{cases}
(S'1) \;\; {\displaystyle \phi(x) = \frac{\omega_1(x)}{{\displaystyle \int_{\I^2} g(x,y) \frac{\omega_2(y)}{\left[ {\displaystyle \int_{\I^1} g(z,y) \phi(z) dz } \right]} dy}}},  \\
\\
(S'2) \;\; {\displaystyle \psi(y) = \frac{\omega_2(y)}{{\displaystyle \int_{\I^1}g(x,y) \phi(x) dx}}}. 
\end{cases} \tag{S'}
\end{equation}
It suffices to find a solution $\phi$ of (\ref{SchrodingerSystem2}1) to get $\psi$ from (\ref{SchrodingerSystem2}2), and hence solve \eqref{SchrodingerSystem2}.
\\
\noindent
Consider instead the solution of the equation
\begin{equation} \label{eq:h}
h(x) = \int_{\I^2} g(x,y) \frac{\omega_2(y)}{\left[ {\displaystyle \int_{\I^1} g(z,y) \frac{\omega_1(z)}{h(z)} dz  }\right]} dy
\end{equation}
which we shall formally write as
\[
 h = \Omega(h) \tag{1'} \label{eq:h'}
\]
Every solution of \eqref{eq:h} which isn't a.e. zero or infinite yields a solution $\phi$ of (\ref{SchrodingerSystem2}1) by:

\begin{equation} \label{eq:hphi}
\phi(x) = \frac{\omega_1(x)}{h(x)}
\end{equation}
Note that \eqref{eq:hphi} does not define $\phi(x)$ for values of $x \in \I^1$ such that $\omega_1(x) = h(x) = 0$. We shall show, however, that there exists a solution $h$ such that $h(x)>0$ everywhere. Thus, we shall devote our attention to finding a solution $h$ to equation \eqref{eq:h} or, equivalently, to finding a fixed point of the map $\Omega$. The proof relies on an iterative scheme and thus requires introducing a suitable functional space to study the iteration. We  introduce the space of functions of class (C) as:
\begin{definition}  {\textbf{(Step $2$)}} {\textbf{[Function of Class (C)]\cite[p.87]{For}}}\label{def:classC}
$H:\I^1 \rightarrow (\R\cup\infty)$ is a function of class (C) if:
\begin{enumerate}[label = \roman*)]
\item $H$ is measurable;
\item There exists $c>0$ such that for every $x \in \I^1$, we have:
\[
H(x)\ge c;
\]
\item For almost every $x \in \I^1$,
\[
H(x) < +\infty.
\]
\end{enumerate}
\end{definition}
Functions of class (C) are a natural inputs for the map $\Omega$ as the following result (Remark II on p. 89 in \cite{For}) shows.
\begin{remark}\label{remark:classC}
\begin{enumerate}[label = \arabic*.]
\item $H \equiv 1$ is of class (C).
\item If $H_1$ is of class (C), and $H_2$ is measurable, finite a.e., and $H_1 \leq H_2$ everywhere, then $H_2$ is of class (C).
\item If $H_2$ is of class (C), and $H_1$ is measurable, $c<H_1$ everywhere for some $c>0$,and $H_1 \leq H_2$ almost everywhere, then $H_1$ is of class (C)
\item If $H_1$ and $H_2$ are of class (C), then $\max(H_1,H_2)$ and $\min(H_1, H_2)$ are of class (C). \footnote{In this paper, the maximum or minimum of two functions will always be taken pointwise.}
\end{enumerate}
\end{remark}
The following properties are never explicitly stated in \cite{For}.
\begin{proposition}[Properties of $\Omega$]\label{prop:MonotoneOmega}
The map $\Omega$ defined in \eqref{eq:h'} is isotone on functions of class (C), meaning that if $H,H'$ are of class (C) such that 
\[
H \leq H'\quad {\rm a.e.},
\]
then 
\[
\Omega(H) \leq \Omega(H')
\]
everywhere. Moreover, $\forall c>0$ and $H$ of class (C) one has $\Omega(c H) = c \; \Omega(H)$, namely $\Omega$ is positively homogeneous of degree one.
\end{proposition}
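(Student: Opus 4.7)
The plan is to recognize $\Omega$ as a composition of four elementary monotone operations and then to verify that each is well-defined on inputs of class (C). Writing
\[
\Omega(H)(x) = \int_{\I^2} g(x,y)\,\omega_2(y)\, J_H(y)^{-1}\, dy, \qquad J_H(y) := \int_{\I^1} g(z,y)\,\frac{\omega_1(z)}{H(z)}\, dz,
\]
I would decompose $\Omega$ as: (i) form $\omega_1/H$ (order-reversing in $H$); (ii) integrate against the nonnegative kernel $g(z,y)$ in $z$ (order-preserving); (iii) take the pointwise reciprocal (order-reversing); (iv) multiply by the nonnegative factor $g(x,y)\omega_2(y)$ and integrate in $y$ (order-preserving). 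Since two reversals cancel, modulo well-definedness the composition is automatically isotone, and positive homogeneity is transparent at the level of the formula.

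The substantive step is therefore checking well-definedness at each stage, which is precisely where class (C) enters. Given $H$ of class (C), the lower bound $H \geq c > 0$ together with \ref{Hv} and \ref{Hiii} yields $J_H(y) \leq \Sigma/c < \infty$ for every $y \in \I^2$. Strict positivity $J_H(y) > 0$ is the delicate point: the set $\{z \in \I^1 : \omega_1(z) > 0,\ H(z) < \infty\}$ has positive Lebesgue measure (because $\int \omega_1 = 1$ and $H$ is a.e.\ finite), and by \ref{Hvii} the set $\{z : g(z,y) = 0\}$ has measure zero, so the integrand of $J_H(y)$ is strictly positive on a set of positive measure. Hence $1/J_H(\cdot)$ is a finite, positive, measurable function of $y$, and $\Omega(H)(x)$ is a well-defined nonnegative number for every $x \in \I^1$.

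With this in hand the remaining assertions are mechanical. If $H \leq H'$ almost everywhere, then $\omega_1/H \geq \omega_1/H'$ a.e., so $J_H(y) \geq J_{H'}(y)$ for every $y$ (the a.e.\ inequality survives integration), hence $1/J_H(y) \leq 1/J_{H'}(y)$; multiplying by $g(x,y)\omega_2(y) \geq 0$ and integrating in $y$ gives $\Omega(H)(x) \leq \Omega(H')(x)$ for every $x \in \I^1$, as required. For positive homogeneity, replacing $H$ by $cH$ with $c>0$ scales the inner integrand by $1/c$, giving $J_{cH} = c^{-1} J_H$, so $J_{cH}(y)^{-1} = c\,J_H(y)^{-1}$; substituting into the outer integral yields $\Omega(cH) = c\,\Omega(H)$. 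I do not expect any serious obstacle in this proposition; the only non-cosmetic point is the strict positivity of $J_H(y)$, which genuinely needs the a.e.\ non-vanishing hypothesis \ref{Hvii} and is not implied by the lower bound $H \geq c$ alone.
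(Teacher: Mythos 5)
Your argument is correct and is essentially the paper's own proof: the inequality $\omega_1/H \ge \omega_1/H'$ a.e.\ propagates through the two integrations and the reciprocal, and homogeneity is immediate from the formula. The extra well-definedness checks you include (finiteness and strict positivity of the inner integral via \ref{Hv}, \ref{Hiii} and \ref{Hvii}) are sound but are handled in the paper separately, in the Lemma for functions of class (C) and its Claim, rather than inside this proposition.
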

\begin{proof}
Suppose $H \leq H'$ a.e. Then, 
\[\frac{\omega_1}{H} \geq \frac{\omega_1}{H'}\quad {\rm a.e.}
\] 
By non-negativity of all the  involved quantities, we get
\[
\int_{\I^2} g(x,y) \frac{\omega_2(y)}{\left[ {\displaystyle \int_{\I^1} g(z,y) \frac{\omega_1(z)}{H(z)} dz  }\right]} dy \leq \int_{\I^2} g(x,y) \frac{\omega_2(y)}{\left[ {\displaystyle \int_{\I^1} g(z,y) \frac{\omega_1(z)}{H'(z)} dz  }\right]} dy
\]
for every $x\in\I^1$. The second property is evident. 
\end{proof}
\subsubsection{Lemma for Functions of Class (C)}
Unfortunately, class (C) is not invariant under map $\Omega$, since the image of a class (C) function might not admit a positive lower bound. Images of class (C) functions under $\Omega$ are however `nearly' of class (C), which is part of the content of his Lemma  \cite[p.89]{For} (notice that we added point $(iv)$ below which  is not in the original statement):
\begin{customlemma}{Lemma}[\cite{For}, p.89]\label{lemma:classC}
Let $H$ be a function of class (C). Define $A = \{ x\in \I^1 | \omega_1(x) > 0\}$.

Let $H' = \Omega(H)$

Then:
\begin{enumerate}[label = \roman*)]
\item $H'$ is measurable;
\item For all compact sets $\mathcal{K} \subseteq \I^1$, there exists a constant $c>0$, depending on $\mathcal{K}$, such that
\[
c<H'(x), \quad \forall x \in \mathcal{K};
\]
\item $H'(x) < +\infty$ for almost every $x \in A$;
\item \begin{equation*} 
\int_{\I^1} \frac{H'(x)}{H(x)} \omega_1(x) dx  = 1;
\end{equation*}
\item If we have moreover $H'(x) \leq H(x)$ or $H'(x) \geq H(x)$ for almost every $x \in A$, then $H'(x) = H(x)$ for almost every $x \in A$.
\end{enumerate}
\end{customlemma}

\begin{proof}
Let $H$ be a function of class (C). In particular, there exists $c>0$ such that $c<H$ everywhere.

Consider two sequences of compact sets $\I^1_1,...,\I^1_n,...$, $\I^2_1,...,\I^2_n,...$ such that:
\[
\begin{cases}
{\displaystyle \I^1_n \subseteq \I^1_{n+1}}, \quad {\displaystyle \I^2_n \subseteq \I^2_{n+1}}, \quad \forall n \in \mathbb{N}^*\\
\I^1_n \uparrow \I^1, \quad \I^2_n \uparrow \I^2, \quad as\;\; n \to +\infty
\end{cases}
\]

Define $\forall y\in \I^2$, $\forall n \in \mathbb{N}^*$ 
\[
G_n(H,y) = \int_{\I^1_n} g(z,y) \frac{\omega_1(z)}{H(z)} dz
\]

First, $G_n(H,\cdot)$ is well defined since $0<c<H$ and $\I^1_n$ is bounded.

Second, for all $y$, there exists $n_y$ such that for all $n\ge n_y$ $G_n(H,\cdot)>0$  from \ref{Hi}-\ref{Hiii} and \ref{Hvii}.

Third, $G_n(H,\cdot)$ is continuous by \ref{Hiv} and the fact that $\I^1_n$ is bounded.

Besides, $G_n(H,y)$ is a non-decreasing sequence in $n$, and from \ref{Hiii},\ref{Hv} we have:
\[
G_n(H,y) \leq \frac{\Sigma}{c} \int_{\I_n^1} \omega_1(z)dz \leq  \frac{\Sigma}{c} 
\]
Which implies that $G_n(H,\cdot)$ is uniformly bounded from above in $n$. Hence by monotone convergence theorem, it admits a pointwise limit
\[
G(H,y) \equiv \int_{\I^1} g(z,y) \frac{\omega_1(z)}{H(z)} dz = \lim_{n \to +\infty} G_n(H,y)
\]
that is a measurable function in $y$, finite everywhere, and positive by monotonicity.

We actually have better than positivity:
\begin{claim}[\cite{For}, p.88]\label{Claim}
For any compact $K \subseteq \I^2$, there exists a constant $\alpha_{K}> 0$ such that 
\[
G(H,y) > \alpha_{K} > 0, \quad \forall y \in K
\]
\end{claim}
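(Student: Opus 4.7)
The plan is to show that $G(H,\cdot)$ is continuous on $\I^2$, so that on any compact $K$ it attains its minimum, and then invoke the already-established pointwise positivity of $G(H,\cdot)$ to conclude that this minimum is strictly positive.

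For continuity, I would fix $y^{*}\in\I^2$ and consider a sequence $y_n\to y^{*}$. Define
\[
f_n(z) := g(z,y_n)\frac{\omega_1(z)}{H(z)},\qquad f^{*}(z):=g(z,y^{*})\frac{\omega_1(z)}{H(z)}.
\]
By continuity of $g$ (assumption \ref{Hiv}), $f_n(z)\to f^{*}(z)$ pointwise on $\I^1$. Because $H$ is of class (C) there exists $c>0$ with $H\ge c$ everywhere, so by \ref{Hv} and \ref{Hiii} we get the uniform bound
\[
0\le f_n(z)\le \frac{\Sigma}{c}\,\omega_1(z),
\]
which is integrable (its integral is $\Sigma/c$). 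The dominated convergence theorem then yields $G(H,y_n)\to G(H,y^{*})$, proving that $y\mapsto G(H,y)$ is continuous on all of $\I^2$.

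Since $K\subseteq\I^2$ is compact and $G(H,\cdot)$ is continuous, there exists $y_{K}\in K$ with $G(H,y_{K})=\min_{y\in K} G(H,y)$. By the argument already given in the main text (using \ref{Hvii} together with $\int\omega_1=1$ and $H<\infty$ a.e.), one has $G(H,y_{K})>0$. Setting $\alpha_{K}:=\tfrac{1}{2}G(H,y_{K})>0$ then gives $G(H,y)>\alpha_{K}$ for every $y\in K$, which is exactly the desired uniform lower bound.

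The only delicate point is verifying that the integrable envelope $\Sigma\omega_1/c$ is genuinely available, which is where the boundedness assumption \ref{Hv} and the uniform lower bound on $H$ (built into the definition of class (C)) are essential—without either of these the dominated convergence step could fail, and one would have to fall back on a Fatou-type inequality combined with a contradiction argument using compactness of $K$. Since both ingredients are in hand, the proof is short and no such detour is required.
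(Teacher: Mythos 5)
Your proof is correct, but it is organized differently from the paper's. The paper argues by contradiction: it first reduces to the truncated integrals $G_n(H,y)$ over the compact sets $\I^1_n$ (using monotonicity in $n$), assumes a sequence $y_k\in K$ with $G_{n_0}(H,y_k)<1/k$, extracts a convergent subsequence $y_k\to y\in K$, passes to the limit by dominated convergence with the same envelope $\frac{\Sigma}{c}\omega_1$ you use, and concludes that $g(\cdot,y)$ vanishes a.e.\ on a positive-measure set $I'\subseteq\I^1_{n_0}$ where $\omega_1/H>0$, contradicting \ref{Hvii}. You instead prove outright that $y\mapsto G(H,y)$ is continuous on all of $\I^2$ (same envelope, same use of \ref{Hiv} and \ref{Hv}, but applied to the full integral over $\I^1$ rather than a truncation, which is legitimate since $\omega_1$ is integrable on all of $\I^1$), and then combine compactness of $K$ with the pointwise positivity of $G(H,\cdot)$ already recorded in the text just before the \ref{Claim}. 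The two arguments use identical ingredients; yours is the direct version and is shorter, and it yields the stronger byproduct that $G(H,\cdot)$ is continuous, while the paper's contradiction argument does not presuppose the pointwise positivity of the limit $G(H,y)$ but in effect re-derives it (uniformly on $K$) through the positive-measure set $I'$ and \ref{Hvii}. Note only that your appeal to pointwise positivity does rest on $H$ being finite a.e.\ and $\int_{\I^1}\omega_1=1$ together with \ref{Hvii}, exactly as stated earlier in the section, so nothing is circular; and your choice $\alpha_K=\tfrac12\min_{y\in K}G(H,y)$ indeed gives the strict uniform bound required.
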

\begin{proofclaim}
By monotonicity of the sequence $(G_n(H,y))_n$, it suffices to show this property on some $G_n(H,y)$ for some $n \in \mathbb{N}^*$.

We are thus seeking to prove that for any compact $
K \subseteq \I^2$, there exists some $n \in \mathbb{N}^*$, and a constant $\alpha_{K,n}>0$ such that for any $y \in K$,  
\[
G_n (H,y)  > \alpha_{K,n} >0
\]
We will proceed to a proof by contradiction.

Choose such a $K$. Assume that for all $n,k \in \mathbb{N}^*$, we can find some $y_k \in K$ where
\[
G_n (H,y_k)  < \frac{1}{k}
\]
Choose $n_0$ large enough such that $\frac{\omega_1}{H} > 0$ a.e. on a set $I' \subseteq \I^1_{n_0}$, of positive measure. Such an $n_0$ and $I'$ exist since 
\[
\I^1_n \uparrow \I^1,\quad \int_{\I^1} \omega_1(z) dz = 1,
\]
and $H$, being of class (C), is a.e. finite. 
According to our assumption, for any $k$, there exists $y_{k} \in K$ such that 
\[
G_{n_0} (H,y_{k}) =  \int_{\I^1_{n_0}} g(z,y_{k}) \frac{\omega_1(z)}{H(z)} dz < \frac{1}{k}.
\]
As $k \to +\infty$, $y_{k}$ converges to a limit $y \in K$, up to extracting a subsequence, since $K$ is compact.

Moreover, $H \ge c$ by Definition \ref{def:classC}, and hence
\[
0 \leq g(z,y_{k}) \frac{\omega_1(z)}{H(z)} < \frac{\Sigma}{c} \omega_1(z), \quad \forall k
\]
which is integrable by \ref{Hiii}. By the dominated convergence theorem, one can pass to the limit inside the integral $G_{n_0}(H,y_{k})$ as $k \to +\infty$ and deduce from the continuity of $g$ that:
\[
\int_{\I^1_{n_0}} g(z,y) \frac{\omega_1(z)}{H(z)} dz = 0
\]
By non-negativity of the integrand, for such a $y$, we have:
\[
g(z,y) \frac{\omega_1(z)}{H(z)} = 0, \quad \text{for almost every } z \in \I^1_{n_0}
\]
This is in particular true for almost every $z \in I' \subseteq \I^1_{n_0}$. 

Recall that for almost every $z \in I'$, $\frac{\omega_1(z)}{H(z)} > 0$.

This implies that  
\[
g(z,y) = 0, \quad \text{for almost every } z \in I'
\]
This contradicts \ref{Hvii} since $I'$ has positive measure, and concludes the proof of the claim.
\end{proofclaim}

We can then conclude that $G(H,y)  > \alpha_m > 0$ $\forall m \in \mathbb{N}^*, y \in \I^2_m$ thanks to the monotonicity of the sequence of $G_n (H,y)$.
We can define for $n \in \mathbb{N}^*$ large enough, $x \in \I^1$:
\begin{equation}\label{newH_n'}
H'_{|n}(x) = \int_{\I^2_n} g(x,y) \frac{\omega_2(y)}{{\displaystyle G(H,y) }}dy.\footnote{\linespread{1}\small In Fortet's paper, $H'_{|n}$ is  denoted $H'_n$ \cite[p.88]{For}. Unfortunately, the same notation is later used for  another quantity \cite[p.90]{For}.}
\end{equation}
This integral is well defined and finite since we showed that $G(H,y)>\alpha_n>0$ for $y \in \I^2_n$, is continuous by \ref{Hiv} and non-decreasing in $n$. We can thus set
\[
H'(x) = \lim_{n \to +\infty} H'_{|n}(x)
\]
to be the pointwise limit (potentially infinite) for every $x \in \I^1$.
$H'$ is measurable, positive and bounded from below by a positive constant on any compact $\mathcal{K} \subseteq \I^1$. The proof of the validity of these properties for $H'$ follows the very same pattern as that for $G(H,y)$. This proves i) and ii).
To prove iii),iv) and v), define: 
\[
F(x,y) = g(x,y) \frac{\omega_2(y)}{G(H,y)} \frac{\omega_1(x)}{H(x)}.
\]
$F(x,y)$ is measurable, non-negative, and bounded for $x \in \I^1_q$, $y \in \I^2_p$, for any $p,q \in \mathbb{N}^*$. This because $g$ is bounded from above, $G(H,\cdot)$ and $H$ are bounded from below by positive constants, and $\omega_1, \omega_2$ are continuous on these compact sets.
We then define
\begin{align}
I_{p,q} &= \iint_{\I^1_q \times \I^2_p} F(x,y) dx dy \\
&= \int_{\I^2_p} \omega_2(y) \frac{G_{q}(H,y)}{G(H,y)} dy = \int_{\I^2} \omega_2(y) \1_{\I^2_p}(y) \frac{G_{q}(H,y)}{G(H,y)} dy \label{eq:Ipq1} \\
&= \int_{\I^1_q} \omega_1(x) \frac{H'_{|p}(x)}{H(x)} dx =\int_{\I^1} \omega_1(x) \1_{\I^1_q}(x) \frac{H'_{|p}(x)}{H(x)} dx   \label{eq:Ipq2}
\end{align}
where we used the Fubini-Tonelli theorem to exchange the order of integration, and we denoted by $\1_{\I}$ the indicator function of the set $\I$.
Furthermore, the monotonicity (in the sense of inclusion) of the sets $\I^1_q,\I^2_p$ and monotonicity of the sequences $H'_{|p}(x),G_{q}(H,y)$ implies the monotonicity of the functions $H'_{|p} \1_{\I^1_q} $ and $G_{q}(H,y) \1_{\I^2_p}$, respectively in $p$ and $q$.
One can then use the Beppo-Levi monotone convergence theorem to take limits as $p$ and $q \to \infty$ inside the integrals in \eqref{eq:Ipq1}, \eqref{eq:Ipq2} to infer first from \eqref{eq:Ipq1} that 
\[
\lim_{p \to +\infty} \lim_{q \to +\infty} I_{p,q} = \int_{\I^2} \omega_2(y) \lim_{p \to +\infty} \1_{\I^2_p}(y) dy = 1.
\] 
It then follows from \eqref{eq:Ipq2} that:
\[
\lim_{p \to +\infty} \lim_{q \to +\infty} I_{p,q} = \int_{\I^1} \omega_1(x) \lim_{p \to +\infty} \frac{H'_{|p}(x)}{H(x)} dx  = 1
\]
which gives
\begin{equation} \label{eq:H'H}
\int_{\I^1} \frac{H'(x)}{H(x)} \omega_1(x) dx  = 1
\end{equation}
Recalling that $A = \{ x\in \I^1 | \omega_1(x) > 0\}$, we derive from \eqref{eq:H'H} that $H'$ is finite a.e. on $A$, otherwise the integral in \eqref{eq:H'H} would be infinite. This establishes iii) and iv).
Finally, assume that for almost every $x \in A$ one has either 
\[
H'(x) \leq H(x), \quad or \quad H'(x) \geq H(x)
\]
Then \eqref{eq:H'H} allows us to conclude that $H'=H$ a.e. on $A$, otherwise we would contradict the fact that $\omega_1$ integrates to one.
This establishes v), and completes the proof of the lemma. 
\end{proof}
The following remark appears as Remark I on p. 89 in \cite{For}.
\begin{remark}\label{remark:lemma}
The lemma remains valid if we only assume that $H$ is measurable but only bounded from below by $0$, as long as we can guarantee that the integral $G(H,y)$ remains finite a.e. in $y$. We can even allow $G(H,y)$ to be infinite for values of $y$ where $\omega_2(y) = 0$.
\end{remark}

The above lemma allows us to extract sufficient information on $H' = \Omega(H)$ in order to proceed to the iteration scheme, and prove the first existence result Theorem \ref{thm:ex1}.

\subsubsection{Step 3: Iterative Procedure}
Starting from $H_1 \equiv 1$, one would like to proceed to successive iterations of $\Omega$ by setting $H_{n+1} = \Omega(H_n)$, and show convergence. As illustrated by the \ref{lemma:classC}, if $H$ is of class (C), then $\Omega(H)$ is not necessarily of class (C). Thus, there is no guarantee of obtaining an a.e. finite function if one applies the map $\Omega$ one more time.
Moreover, one has to guarantee the convergence of such an iteration scheme.
Fortet therefore introduces a \textbf{truncation} procedure between two successive iterations of $\Omega$ that takes care of these issues.
The approximation scheme reads \cite[p.90]{For}:
\begin{align*} \label{approxscheme} \tag{AS}
& H_1 \equiv 1, \hspace{2.8cm} H_1' = \Omega(H_1), \quad H_1'' = \min(H_1, H_1') \\
& H_n = \max\left(H_{n-1}'', \frac{1}{n}\right), \quad H_n' = \Omega(H_n), \quad H_n'' = \min(H_1, H_n'), \quad \forall n \geq 2
\end{align*}
The $\max$ step guarantees that $H_n$ always remains in the class (C), and hence we can apply $\Omega$ in the iteration scheme. The vanishing lower bound will lead to a fixed point of $\Omega$ which is not necessarily of class (C).
As for the $\min$ step, it is needed, in particular,  to guarantee the monotonicity of the scheme. 

\begin{innercustomobs}\label{remark:H'1finite}
Note that condition \eqref{cond:thm1}, as well as assumption \emph{(H.v)}, guarantees the (everywhere) finiteness of $H_1' = \Omega(H_1)$, since
\[
H_1'(x) = \Omega(H_1)(x) = \int_{\I^2} g(x,y) \frac{\omega_2(y)}{\left[{\displaystyle \int_{\I^1} g(z,y) \omega_1(z) dz }\right]} dy < \Sigma \int_{\I^2}\frac{\omega_2(y)}{\left[{\displaystyle \int_{\I^1} g(z,y) \omega_1(z) dz }\right]} dy< +\infty. 
\]
\end{innercustomobs}

The following result is stated, but not proven, on \cite[p.90]{For}.

\begin{proposition}[Monotonicity of the scheme (\ref{approxscheme})] \label{lemma:monotonescheme}
For $H_n$, $H_n'$ defined by the scheme {\em (\ref{approxscheme})}, one has $\forall n \in \mathbb{N}^*$:
\[
H_{n+1} \leq H_n, \quad H_{n+1}' \leq H_n'
\]
everywhere.
\end{proposition}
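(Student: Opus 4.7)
The plan is to proceed by induction on $n$, proving the two assertions $H_{n+1} \leq H_n$ and $H_{n+1}' \leq H_n'$ simultaneously. The engine driving the induction will be the isotonicity of $\Omega$ (Proposition on $\Omega$), coupled with a bookkeeping invariant that each $H_n$ belongs to class (C); without the latter, the isotonicity result does not apply. The invariant has three parts: $H_n$ is measurable (since $\Omega$ preserves measurability by part (i) of the Lemma, and $\max/\min$ preserve it too), $H_n \geq 1/n > 0$ by construction, and $H_n \leq 1$ will emerge as a by-product of the induction, guaranteeing finiteness everywhere. The Observation preceding the proposition (which uses hypothesis \eqref{cond:thm1} and \ref{Hv}) supplies that $H_1'$ is finite everywhere and so starts this invariant off.

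For the base case, I would note $H_2 = \max(\min(H_1, H_1'), 1/2) \leq 1 = H_1$ by inspection, since both arguments of the outer $\max$ lie in $[0,1]$, and then appeal to isotonicity of $\Omega$ on the class-(C) pair $(H_2, H_1)$ to conclude $H_2' = \Omega(H_2) \leq \Omega(H_1) = H_1'$. For the inductive step, assuming the claim at stage $n$, I would argue in two moves. First, the inductive hypothesis $H_n' \leq H_{n-1}'$ together with monotonicity of the map $z \mapsto \min(H_1, z)$ gives $H_n'' \leq H_{n-1}''$. Second, since $1/(n+1) \leq 1/n$ and $\max$ is jointly monotone, the definitions
$$H_{n+1} = \max\bigl(H_n'', \tfrac{1}{n+1}\bigr) \leq \max\bigl(H_{n-1}'', \tfrac{1}{n}\bigr) = H_n$$
yield the first of the two desired inequalities. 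A second application of the isotonicity of $\Omega$, legitimate because the invariant certifies that both $H_n$ and $H_{n+1}$ are of class (C), then gives $H_{n+1}' \leq H_n'$, closing the induction.

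Frankly, I do not foresee any serious obstacle here. The proof is a short monotonicity chase once the isotonicity of $\Omega$ is available; the only subtlety is the class-(C) bookkeeping at each step, and this is precisely what the two truncation devices in the scheme were introduced to handle ($\max$ with $1/n$ secures a positive lower bound, while $\min$ with $H_1 \equiv 1$ caps the iterates at $1$ and hence keeps them finite). One minor notational point: isotonicity as stated in the proposition requires the hypothesis only a.e., whereas I will have the stronger everywhere inequality at each stage, so nothing extra needs to be verified there.
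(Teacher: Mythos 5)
Your proposal is correct and follows essentially the same route as the paper: induction on $n$, using the isotonicity of $\Omega$ on class (C) functions together with the monotonicity of the $\min$/$\max$ truncations and the decreasing thresholds $1/n$. The only cosmetic difference is your explicit class-(C) bookkeeping (which in fact needs no induction, since $1/n \leq H_n \leq 1$ follows directly from the definitions), which the paper leaves implicit.
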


\begin{proof}
By the monotonicity property of $\Omega$ in Proposition~\ref{prop:MonotoneOmega}, it suffices to show that $H_{n+1} \leq H_n$ to deduce that $H_{n+1}' \leq H_n'$, since by definition $H_n' = \Omega(H_n)$, $\forall n \in \mathbb{N}^*$. We prove $H_{n+1} \leq H_n$ by induction.
For $n=1$:
\[
H_1'' = \min(H_1, \Omega(H_1)) =  \begin{cases} 1 \text{ if } \Omega(H_1) \geq 1, \\ \Omega(H_1) \text{ if } \Omega(H_1) \leq 1 .\end{cases}
\]
Thus
\[
H_2 = \max\left(H_1'', \frac{1}{2}\right) =  \begin{cases} 1, \text{ if } \Omega(H_1) \geq 1, \\ \min(\Omega(H_1),\frac{1}{2}), \text{ if } \Omega(H_1) \leq 1, \end{cases} \leq 1 = H_1,
\]
which proves the initialization step of the induction.
Let us now assume that the property is true for some $n \in \mathbb{N}^*$, namely we have 
\[
H_{n+1} \leq H_n
\]
pointwise.
Then, by the monotonicity of $\Omega$ (Proposition \ref{prop:MonotoneOmega}), we have that $H_{n+1}' \leq H_{n}'$, and thus 
\[
H_{n+1}'' = \min(H_{n+1}', H_1) \leq \min(H_{n}', H_1) = H_n''
\]
Since we also have $\frac{1}{n+2} < \frac{1}{n+1}$, we can infer that
\[
H_{n+2} = \max\left( H_{n+1}'',\frac{1}{n+2}\right) \leq  \max\left( H_{n}'',\frac{1}{n+1}\right) = H_{n+1}
\]
which concludes the proof by induction.
\end{proof}
\begin{innercustomobs}\label{remark:finite}
Since $H_1\equiv 1$, each $H_n$ is finite everywhere. In addition, \ref{remark:H'1finite} and Proposition \ref{lemma:monotonescheme} also show that each $H_n'$ is finite everywhere.
\end{innercustomobs}
\noindent
The monotonicity of Proposition~\ref{lemma:monotonescheme} will be crucial to establishing existence of a fixed point for \eqref{eq:h'}. When iterating (\ref{approxscheme}), we distinguish two separate cases which lead to different fixed points:

\subsubsection{First Case {\rm \cite[Section 2, p. 86]{For}}}
In this case, we assume that, as we iterate following the approximation scheme (\ref{approxscheme}), there exists some $n_0 \in \mathbb{N}^*$ such that a.e. one has
\begin{equation} \label{eq:case1}
H_{n_0}' \leq H_1.
\end{equation}
We shall show, using the \ref{lemma:classC}, that $\Omega(H_{n_0}')$ is a solution to equation \eqref{eq:h} (and that $H_{n_0}'$ is `nearly' a solution). We first need to show that $\Omega(H_{n_0}')$ is well defined. This will be accomplished by approximating $H_{n_0}'$ as shown below.
First of all, notice that \eqref{eq:case1} together with the  definition of $H_{n_0}''$ in scheme (\ref{approxscheme}) yields
\begin{equation}\label{eq:H''H'}
H_{n_0}'' = H_{n_0}'.
\end{equation}
Let us define
\begin{equation} \label{eq:Kp}
K_p = \max \left( H_{n_0}', \frac{1}{p} \right), \quad p \in \mathbb{N}^*.
\end{equation}
Although $H_{n_0}'$ may not be of class (C), it follows from the \ref{lemma:classC} that $K_p$ is of class (C) since we have the uniform lower bound $\frac{1}{p}>0$. Furthermore, as $p \to +\infty$, $K_p \to H_{n_0}'$ pointwise.
Set 
\begin{equation*}
K_p' = \Omega(K_p).
\end{equation*}
Note that $K_{p+1} \leq K_p$. By Proposition \ref{prop:MonotoneOmega}, the sequence of $K_p' = \Omega(K_p)$ is also decreasing in $p$. 
By the non-negativity of $K_p'$,  the sequence $\{K_p'\}$ admits a pointwise limit $K'$ which is measurable and non-negative:
\begin{equation} \label{eq:limK'}
K' = \lim_{p \to +\infty} K_p' = \lim_{p \to +\infty} \Omega(K_p).
\end{equation}
Recalling that $\Omega$ was defined as an integral operator, we can then use Beppo-Levi monotone convergence theorem to get from the monotonicity of the sequence of $K_p$ that
\[
\lim_{p \to +\infty} \Omega(K_p) = \Omega \left(\lim_{p \to +\infty} K_p \right) = \Omega(H_{n_0}').
\]
Putting this together with \eqref{eq:limK'}, we finally get
\begin{equation}\label{eq:K'}
K' = \Omega(H_{n_0}').
\end{equation}
To show that $K'$ is a solution of (\ref{eq:h'}), we first need the following result whose statement and sketch of the proof can be found on \cite[p.91]{For}.
\begin{proposition} \label{prop:claim1}
\[
\int_{\I^1} \frac{K'(x)}{H_{n_0}'(x)} \omega_1(x) dx = 1
\]
\end{proposition}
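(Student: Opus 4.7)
My plan is to combine a double-integral application of Fubini--Tonelli with the class-(C) approximation $K_p = \max(H_{n_0}', 1/p)$ introduced in (\ref{eq:Kp}). Since $K' = \Omega(H_{n_0}')$ by (\ref{eq:K'}), the non-negative integrand $\mathcal F(x,y) = g(x,y)\,\omega_2(y)\,\omega_1(x)/(G(H_{n_0}', y)\, H_{n_0}'(x))$ (taken to be $0$ when $G(H_{n_0}', y) = +\infty$) satisfies the Fubini--Tonelli identity
\[
\int_{\I^1} \frac{K'(x)}{H_{n_0}'(x)} \omega_1(x)\, dx \;=\; \iint_{\I^1 \times \I^2} \mathcal F(x,y)\, dx\, dy \;=\; \int_{\{y \in \I^2 \,:\, G(H_{n_0}', y) < +\infty\}} \omega_2(y)\, dy,
\]
obtained by integrating $y$ first (recognizing $K'$ from its definition) and then $x$ first (using $\int g(x,y)\omega_1(x)/H_{n_0}'(x)\,dx = G(H_{n_0}',y)$ together with $G/G = 1$ on $\{G < \infty\}$). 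Hence the proposition splits into two independent tasks: the upper bound $\int K'\omega_1/H_{n_0}'\, dx \leq 1$, and the a.e.\ finiteness statement $G(H_{n_0}', y) < +\infty$ for $m$-a.e.\ $y \in \{\omega_2 > 0\}$.

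For the upper bound, I would apply Lemma iv to each $K_p$, which is of class (C) by construction, obtaining $\int K_p'(x)\omega_1(x)/K_p(x)\, dx = 1$ for every $p$. Because $H_{n_0}'(x) > 0$ for every $x \in \I^1$ (Lemma ii), $K_p(x) = H_{n_0}'(x)$ for all $p$ sufficiently large at each fixed $x$; combined with the monotone convergence $K_p' \searrow K'$ established prior to the statement, this yields $K_p'(x)/K_p(x) \to K'(x)/H_{n_0}'(x)$ pointwise, and Fatou's lemma gives the upper bound immediately.

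The main obstacle is the a.e.\ finiteness of $G(H_{n_0}', \cdot)$ on $\{\omega_2 > 0\}$, because $H_{n_0}'$ has no uniform positive lower bound and $\omega_1/H_{n_0}'$ may fail to be integrable. I would attack it by exploiting the Case~1 hypothesis $H_{n_0}' \leq H_1 = 1$ (which yields $G(H_{n_0}', y) \geq G(H_1, y) = \int g(z,y)\omega_1(z)\,dz$ and hence, via condition $\star$, places $\omega_2/G(H_{n_0}', \cdot)$ in $L^1(\I^2)$) together with the Fubini identity
\[
\int_{\I^2} \omega_2(y)\, \frac{G(H_{n_0}', y)}{G(H_{n_0+1}, y)}\, dy \;=\; \int_{\I^1} H_{n_0+1}'(x)\, \frac{\omega_1(x)}{H_{n_0}'(x)}\, dx,
\]
whose right-hand side is controlled by Observation~\ref{remark:finite} and the monotonicity of the scheme. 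Since $H_{n_0+1} \geq 1/(n_0+1)$ forces $G(H_{n_0+1}, y) < +\infty$ everywhere, the identity constrains the ratio $G(H_{n_0}', y)/G(H_{n_0+1}, y)$ to be $\omega_2$-integrable, which delivers the sought a.e.\ finiteness of $G(H_{n_0}', y)$ on $\{\omega_2 > 0\}$ and thereby closes the argument.
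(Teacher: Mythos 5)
Your argument is correct, but it follows a genuinely different route from the paper. The paper never touches the possibly ill-behaved quantity $G(H_{n_0}',\cdot)$ directly: it derives the uniform bound $K_p'/K_p\le n_0$ (from $H_{n_0}\ge \tfrac1{n_0}H_1$ via homogeneity and isotonicity of $\Omega$, plus $K_p\le H_1$ a.e.\ from the Case-1 hypothesis \eqref{eq:case1}), applies \ref{lemma:classC}~iv) to each class-(C) approximant $K_p$, and passes to the limit by bounded convergence. You instead prove the normalization identity for the limit object itself by Tonelli, which reduces everything to showing $G(H_{n_0}',y)<+\infty$ for a.e.\ $y$ with $\omega_2(y)>0$; you then obtain this from a second Tonelli identity pairing $\omega_1/H_{n_0}'$ against $\omega_2/G(H_{n_0+1},\cdot)$, whose right-hand side is at most $\int\omega_1=1$ because $H_{n_0+1}'\le H_{n_0}'$ (Proposition~\ref{lemma:monotonescheme}) and $H_{n_0}'>0$ everywhere (\ref{lemma:classC}~ii)), while $0<G(H_{n_0+1},y)\le(n_0+1)\Sigma$ since $H_{n_0+1}\ge\tfrac1{n_0+1}$ is of class (C). In effect you re-prove \ref{lemma:classC}~iv) for the non-class-(C) input $H_{n_0}'$; a bonus of your route is that it delivers the a.e.\ finiteness of $G(H_{n_0}',\cdot)$ on $\{\omega_2>0\}$ directly, a fact the paper only extracts later (in proving $0<K'\le1$) via $H_{n_0}'=H_{n_0}$ a.e.\ on $A$. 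Two small points: your Fatou/upper-bound step is redundant, since the first Tonelli identity already gives $\int K'\omega_1/H_{n_0}'\,dx=\int_{\{G(H_{n_0}',\cdot)<\infty\}}\omega_2\,dy\le1$; and one should note explicitly that the convention $1/\infty=0$ in your $\mathcal F$ agrees with the definition of $K'=\Omega(H_{n_0}')$ in \eqref{eq:K'} (it does, since $K'$ arises as the decreasing limit of $\Omega(K_p)$ with $G(K_p,y)\uparrow G(H_{n_0}',y)$, dominated thanks to \eqref{cond:thm1} and \ref{Hv}), and that $G(H_{n_0+1},y)>0$ everywhere, which follows from the Claim in the proof of the \ref{lemma:classC} because $H_{n_0+1}$ is of class (C).
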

\begin{proof}
From the scheme (\ref{approxscheme}), we know that
\[
H_{n_0} \geq \frac{1}{n_0} = \frac{1}{n_0} H_1
\]
Using both properties of Proposition \ref{prop:MonotoneOmega}, we get
\[
H_{n_0}' \geq \frac{H_1'}{n_0}.
\]
By the definition of $K_p$ \eqref{eq:Kp}, we now get:
\begin{equation} \label{eq:Kps}
K_p \geq H_{n_0}' \geq \frac{H_1'}{n_0}.
\end{equation}
Furthermore, since we assumed that $H_{n_0}' \leq 1$ a.e., one also has by the definition of $K_p$ \eqref{eq:Kp} that $K_p \leq H_1 = 1$ a.e.. This implies, by Proposition \ref{prop:MonotoneOmega} that $K_p' \leq H_1'$ everywhere.
Plugging the latter inequality in \eqref{eq:Kps} yields that $\forall p \in \mathbb{N}^*$,
\begin{equation} \label{eq:Kp'Kp}
\frac{K_p'}{K_p} \leq n_0.
\end{equation}
This implies that, taking the limit for $p \to +\infty$, we also have 
\[\frac{K'}{H_{n_0}'} \leq n_0.\]
Since $K_p$ is of class (C), \ref{lemma:classC} iv) yields 
\[
\int_{\I^1} \frac{K_p'(x)}{K_p(x)} \omega_1(x) dx = 1
\]
By \eqref{eq:Kp'Kp}, the integrand is uniformly bounded in $p$. By \ref{Hiii}, the measure is finite. We conclude by the bounded convergence theorem that
\begin{equation*}
\int_{\I^1} \frac{K'(x)}{H_{n_0}'(x)} \omega_1(x) dx = 1
\end{equation*}
which concludes the proof. 
\end{proof}

Lastly, we shall also need the following result whose statement and sketch of the  proof can also be found on \cite[p.91]{For}.
\begin{proposition} \label{prop:claim3}
We have
\[
K' \leq H_{n_0}'
\]
everywhere on $\I^1$.
\end{proposition}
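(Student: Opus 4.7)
The plan is to identify the truncated approximants $K_p$ with iterates of the scheme (\ref{approxscheme}) past $n_0$, and then exploit the already-proved monotonicity of those iterates. The key observation is that in the First Case, the definition of $H_{n_0}''$ in the scheme and the bound $H_{n_0}' \leq H_1 = 1$ a.e. force
\[
H_{n_0}'' = \min(H_{n_0}', 1) = H_{n_0}' \quad \text{a.e.},
\]
so that
\[
H_{n_0+1}(x) = \max\!\left(H_{n_0}''(x), \tfrac{1}{n_0+1}\right) = \max\!\left(H_{n_0}'(x), \tfrac{1}{n_0+1}\right) = K_{n_0+1}(x)
\]
for a.e.\ $x \in \I^1$. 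Because $\Omega$ is an integral operator with respect to $x$, altering its argument on a set of Lebesgue measure zero does not change the output; hence $\Omega(K_{n_0+1}) = \Omega(H_{n_0+1})$ everywhere, i.e.\ $K_{n_0+1}' = H_{n_0+1}'$ pointwise.

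Next, I invoke Proposition~\ref{lemma:monotonescheme}, which gives $H_{n_0+1}' \leq H_{n_0}'$ everywhere on $\I^1$. Combining with the pointwise identity just established yields
\[
K_{n_0+1}'(x) \leq H_{n_0}'(x) \quad \forall x \in \I^1.
\]
To upgrade this from the single index $p = n_0+1$ to all sufficiently large $p$, I use that the sequence $K_p = \max(H_{n_0}', 1/p)$ is decreasing in $p$, so for $p \geq n_0+1$ one has $K_p \leq K_{n_0+1}$ everywhere. Both $K_p$ and $K_{n_0+1}$ are of class (C) (they have the positive uniform lower bounds $1/p$ and $1/(n_0+1)$), so Proposition~\ref{prop:MonotoneOmega} applies and gives $K_p' \leq K_{n_0+1}' \leq H_{n_0}'$ everywhere on $\I^1$.

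Finally, passing to the pointwise limit $p \to +\infty$ in $K_p' \leq H_{n_0}'$ and using the definition \eqref{eq:limK'} of $K'$ yields $K' \leq H_{n_0}'$ everywhere, as required. The main subtlety — and the only real point at which one must be careful — is the bookkeeping of ``a.e.'' versus ``everywhere'': the hypothesis of the First Case is only a.e., and the Lemma's conclusions are a mix of pointwise and a.e.\ statements. This is resolved precisely because $\Omega$ is an integral operator in $x$, so equality a.e.\ of its inputs produces equality everywhere of its outputs, which in turn lets us transport the (everywhere) monotonicity of the scheme across the identification $K_{n_0+1} = H_{n_0+1}$.
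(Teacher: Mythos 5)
Your proof is correct and follows essentially the same route as the paper: identify $K_{n_0+1}$ with $H_{n_0+1}$ via \eqref{eq:H''H'}, use the monotonicity of the scheme (Proposition \ref{lemma:monotonescheme}) and of $\Omega$ on the decreasing sequence $K_p$, and pass to the limit defining $K'$. Your extra remark that $\Omega$, being an integral operator in its argument, turns the a.e.\ identity $H_{n_0+1}=K_{n_0+1}$ into an everywhere identity of the images is a careful touch that the paper leaves implicit, but it does not change the argument.
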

\begin{proof}
First of all, notice that for $p > n_0+1$, one has from the scheme (\ref{approxscheme}), from \eqref{eq:H''H'} and from the definition of \eqref{eq:Kp} $K_p$:
\[
H_{n_0+1} = \max\left( H_{n_0}'', \frac{1}{n_0+1} \right) = \max\left( H_{n_0}', \frac{1}{n_0+1} \right) = K_{n_0+1}.
\]
By monotonicity of the sequence of $K_p$, we also have that, for $p > n_0+1$, $K_{n_0+1} \geq K_p$ everywhere. This together with the above equality then gives for $p > n_0+1$:
\[
H_{n_0+1} \geq K_p.
\]
Applying $\Omega$ to both sides of the above inequality and using again Proposition \ref{prop:MonotoneOmega}, we get
\[
K_p' \leq H_{n_0+1}'
\]
Since $K_p' \geq K'$ and $H_{n_0+1}' \leq H_{n_0}'$ (Proposition \ref{lemma:monotonescheme}), we finally obtain 
\[
K' \leq H_{n_0}'.
\]
\end{proof}
We now employ Propositions \ref{prop:claim1} and \ref{prop:claim3} to complete the first case: On $A = \{ x \in \I^1 | \omega_1(x) > 0\}$, we must have a.e.
\begin{equation}\label{eq:fixed}
K' = H_{n_0}'.
\end{equation}
Recalling that $K' =\Omega(H_{n_0}')$ (see(\ref{eq:K'})), we conclude from \eqref{eq:fixed} that \[\Omega(H_{n_0}') = H_{n_0}', \quad {\rm a.e. \;on}\; A.\] 
We proceed to show that actually this equality holds on all of $\I^1$.
Indeed, by (\ref{eq:fixed}), for every $y \in \I^2$:
\begin{align*}
G(K',y) &= \int_{\I^1} g(z,y) \frac{\omega_1(z)}{K'(z)} dz = \int_{A} g(z,y) \frac{\omega_1(z)}{K'(z)} dz \\
&= \int_{A} g(z,y) \frac{\omega_1(z)}{H_{n_0}'(z)} dz = \int_{\I^1} g(z,y) \frac{\omega_1(z)}{H_{n_0}'(z)} dz \\
&= G(H_{n_0}',y).
\end{align*}
It follows in view of (\ref{eq:K'}), that for every $x \in \I^1$:
\[
\Omega(K')(x) = \int_{\I^2} g(x,y) \frac{\omega_2(y)}{G(K',y)} dy = \int_{\I^2} g(x,y) \frac{\omega_2(y)}{G(H_{n_0}',y)} dy = \Omega(H_{n_0}')(x) = K'(x). 
\]
Thus, $K'$ is a fixed point of the map $\Omega$. This concludes the proof of the first case. 

The following bounds for $K'$ are merely stated on \cite[p.91]{For}.
\begin{proposition} For every $x \in \I^1$
\begin{equation}\label{eq:0K'1}
0 < K'(x) \leq 1
\end{equation}
\end{proposition}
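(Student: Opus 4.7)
The plan is to handle the two bounds separately, leveraging what has already been established: Proposition \ref{prop:claim3} ($K' \leq H_{n_0}'$ pointwise), the Case 1 hypothesis ($H_{n_0}' \leq H_1 = 1$ a.e.), the fixed-point identity $K' = \Omega(K')$, and the equality $G(K', y) = G(H_{n_0}', y)$ a.e. (which follows from $K' = H_{n_0}'$ a.e.\ on $A$ together with $\omega_1 \equiv 0$ off $A$).

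For the upper bound $K'(x) \leq 1$, the a.e.\ version is immediate by chaining the two inequalities above: $K' \leq H_{n_0}' \leq 1$ almost everywhere. To upgrade to an everywhere bound, I would show that $K'$ is continuous on $\I^1$. From the definition $K'(x) = \int_{\I^2} g(x,y) \omega_2(y)/G(H_{n_0}', y)\,dy$ and the a.e.\ inequality $H_{n_0}' \leq 1$, one obtains $G(H_{n_0}', y) \geq G(H_1, y) = \int_{\I^1} g(z,y) \omega_1(z)\,dz$, whence the integrand is dominated, uniformly in $x$, by $\Sigma\,\omega_2(y)/G(H_1, y)$; this majorant is integrable precisely by condition (\ref{cond:thm1}), and (Hiv) gives pointwise continuity of $g(\cdot, y)$. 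Dominated convergence then yields continuity of $K'$. Since $\{K' \leq 1\}$ is closed by continuity and has full Lebesgue measure, it must coincide with $\I^1$: any nonempty relative open subset of $\I^1$ reaches into the interior and therefore has positive Lebesgue measure, contradicting full measure of the complement.

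For the positivity $K'(x) > 0$, I would use the fixed-point equation directly: $K'(x) = \int_{\I^2} g(x,y) \omega_2(y)/G(K', y)\,dy$. Hypothesis \ref{Hvi} says $y \mapsto g(x, y)$ is positive almost everywhere in $\I^2$, while \ref{Hii}--\ref{Hiii} force $\omega_2 > 0$ on a set of positive Lebesgue measure. The remaining point is that $G(K', y) < +\infty$ on a set of positive measure intersecting $\{g(x,\cdot)\omega_2 > 0\}$; this is inherited from the scheme, since $G(K', y) = G(H_{n_0}', y)$ a.e.\ and $G(H_{n_0}', y) < +\infty$ a.e.\ (it arises as the monotone limit of the finite functions $G(K_p, y)$, and cannot be $+\infty$ on a set where the fixed-point integral would otherwise be forced to vanish against the established values of $K'$ on $A$). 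Thus the integrand is strictly positive on a set of positive measure, giving $K'(x) > 0$.

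The main obstacle is the a.e.-to-everywhere upgrade in the upper bound: it rests entirely on continuity of $K'$, which in turn relies on condition (\ref{cond:thm1}) to produce an $x$-uniform integrable majorant. Absent (\ref{cond:thm1}) the upgrade is not automatic, since $H_{n_0}'$ could potentially exceed $1$ on a null set on which $K'$ might also exceed $1$.
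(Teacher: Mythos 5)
The positivity half of your argument has a genuine gap. You reduce everything to the assertion that $G(H_{n_0}',y)<+\infty$ a.e.\ (or at least on a positive-measure part of $\{\omega_2>0\}$), but your justification does not hold up: $G(K_p,y)\uparrow G(H_{n_0}',y)$ is a monotone limit of finite functions, and such a limit can perfectly well be $+\infty$; the parenthetical remark that it ``cannot be $+\infty$ on a set where the fixed-point integral would otherwise be forced to vanish'' is a gesture, not a proof (and at best it would give finiteness on a positive-measure subset of $\{\omega_2>0\}$, not a.e.). Since $H_{n_0}'$ need not be of class (C), the absence of a lower bound is exactly the crux here, and the paper closes it differently: from the scheme \eqref{approxscheme} and \eqref{eq:H''H'} one has $H_{n_0+1}\ge H_{n_0}''=H_{n_0}'$, while Proposition \ref{lemma:monotonescheme} gives $H_{n_0}\ge H_{n_0+1}$, hence $H_{n_0}\ge H_{n_0}'$ everywhere; then \ref{lemma:classC} v) forces $H_{n_0}'=H_{n_0}$ a.e.\ on $A$, so $H_{n_0}'\ge c>0$ a.e.\ on $A$ for the class-(C) constant $c$ of $H_{n_0}$. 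This yields the uniform bound $G(H_{n_0}',y)\le \Sigma/c$ for every $y$, and therefore the quantitative estimate $K'(x)\ge \frac{c}{\Sigma}\int_{\I^2} g(x,y)\,\omega_2(y)\,dy>0$ by \ref{Hi}, \ref{Hiii}, \ref{Hvi}. If you wanted to repair your route instead, you would have to first invoke Proposition \ref{prop:claim1} to see that $K'>0$ on a positive-measure subset of $A$, deduce that $E=\{y:\omega_2(y)>0,\ G(H_{n_0}',y)<\infty\}$ has positive measure, and then integrate over $E$ using \ref{Hvi}; as written, the finiteness claim is simply unsupported.

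On the upper bound: your observation that $K'\le H_{n_0}'\le 1$ is a priori only a.e.\ is a fair point, but your fix is flawed. The claim that every nonempty relatively open subset of the closed set $\I^1$ meets its interior (hence has positive Lebesgue measure) is false in general — e.g.\ a half-space with a segment attached is closed with nonempty interior, yet has relatively open pieces of measure zero — so continuity of $K'$ alone does not upgrade the a.e.\ bound at such points. The paper argues instead through the everywhere chain obtained from \eqref{eq:H''H'}, the identification $H_{n_0+1}=K_{n_0+1}$, and Proposition \ref{prop:MonotoneOmega}: $K'\le K_{n_0+1}'=\Omega(H_{n_0+1})\le \Omega(H_{n_0})=H_{n_0}'$, combined with the Case 1 bound $H_{n_0}'\le H_1=1$; your continuity detour (which essentially anticipates Proposition \ref{prop:hcont}) is not needed and, in the stated generality of $\I^1$, does not close the gap it was meant to close.
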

\begin{proof}
By assumption (\ref{eq:case1}), $H_{n_0}' \leq 1$ a.e which implies $H_{n_0}'' = H_{n_0}'$ (\ref{eq:H''H'}). 
Hence
\[
H_{n_0+1} = \max\left(H_{n_0}'', \frac{1}{n_0+1}\right) = \max\left(H_{n_0}', \frac{1}{n_0+1}\right) = K_{n_0+1}
\]
by definition  \eqref{eq:Kp} of $K_p$. Applying the map $\Omega$ and using Proposition \ref{prop:MonotoneOmega}, we get
\[
H_{n_0}' =\Omega(H_{n_0}) \geq \Omega(H_{n_0+1}) = \Omega(K_{n_0+1}) = K_{n_0+1}'
\]
Since the sequence of $K'_{p}$ monotonically decreases to $K'$, we then conclude that
\[
1 \geq H_{n_0}' \geq  K_{n_0+1}' \geq K'.
\]
Thus, $K' \leq 1$ everywhere. To prove $K'>0$, recall that by (\ref{eq:K'}) 
\[
K'(x) = \Omega(H_{n_0}')(x) = \int_{\I^2} g(x,y)\frac{\omega_2(y)}{\left[ {\displaystyle \int_{\I^1} g(z,y) \frac{\omega_1(z)}{H_{n_0}'(z)} dz} \right]}dy
\]
$H_{n_0}'$ is not necessarily of class (C). In particular, we do not have an \emph{a priori} positive lower bound. Thus we cannot  apply the \ref{lemma:classC} to prove the statement as we cannot \emph{a priori} guarantee that 
\[
 \int_{\I^1} g(z,y) \frac{\omega_1(z)}{H_{n_0}'(z)} dz < +\infty, \quad \text{ a.e. in } y.
\]
Notice instead that since $H_{n_0}' = H_{n_0}''$, we get from the scheme (\ref{approxscheme}):
\[
H_{n_0+1} \geq H_{n_0}'' = H_{n_0}'
\] 
By Proposition \ref{lemma:monotonescheme}, $H_{n_0} \geq H_{n_0+1}$ and thus $H_{n_0} \geq H_{n_0}'$ everywhere. Since $H_{n_0}$ is of class (C), we have by the \ref{lemma:classC} v) that $H_{n_0} = H_{n_0}'$ a.e. on $A=\{ x\in \I^1 | \omega_1(x) > 0\}$.
In particular, there exists a constant $c>0$ such that for a.e. $x \in A$, $H_{n_0}'(x) \geq c$. It follows that
\begin{align*}
\int_{\I^1} g(z,y) \frac{\omega_1(z)}{H_{n_0}'(z)} dz &= \int_{ A} g(z,y) \frac{\omega_1(z)}{H_{n_0}'(z)} dz + \int_{\I^1 \backslash A} g(z,y) \frac{\omega_1(z)}{H_{n_0}'(z)} dz \\
&= \int_{A} g(z,y) \frac{\omega_1(z)}{H_{n_0}'(z)} dz, \quad \text{ since } \omega_1 = 0 \text{ on } \I^1 \backslash A \text{ and } H_{n_0}'(z)>0\\
&\leq \frac{1}{c} \int_{A} g(z,y) \omega_1(z) dz, \quad \text{ since } H_{n_0}' \geq c \text{ on } A \\
& \leq \frac{\Sigma}{c} , \quad \text{ from \ref{Hiii},\ref{Hv}} 
\end{align*}
We conclude that,  for all $x \in \I^1$, we have 
\begin{align*}
K'(x) &= \int_{\I^2} g(x,y)\frac{\omega_2(y)}{\left[ {\displaystyle \int_{\I^1} g(z,y) \frac{\omega_1(z)}{H_{n_0}'(z)} dz} \right]}dy \\
&\geq \frac{c}{\Sigma} \int_{\I^2} g(x,y) \omega_2(y) dy > 0,
\end{align*}
where the last inequality follows from \ref{Hi},\ref{Hiii},\ref{Hvi}. 
\end{proof}
\subsubsection{Second Case  {\rm \cite[Section 2, p. 92]{For}}}
Contrary to the first case, assume now that $\forall n \in \mathbb{N}^*$, there exists a positive measure set $J_n$ on which $H_n'> H_1$.
Define by $H$ and $H'$ the respective limits of the sequences $H_n$ and $H_n'$. By Proposition \ref{lemma:monotonescheme}, nonnegativity of the sequences and  \ref{remark:finite}, these limits exist, are measurable and finite. We shall show that $H'$ is a fixed point of the map $\Omega$.
First notice that the sequence of $J_n$'s is monotonically decreasing:
\begin{proposition}[Monotonicity of $J_n$]\label{prop:Jn}
We have $\forall n \in \mathbb{N}^*$:
\[
J_{n+1} \subseteq J_n
\]
\end{proposition}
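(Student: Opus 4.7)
The plan is to reduce the containment $J_{n+1} \subseteq J_n$ to the pointwise monotonicity of the sequence $\{H_n'\}$ already established in Proposition \ref{lemma:monotonescheme}. Recall that $H_1 \equiv 1$, so the relevant set is $J_n = \{x \in \I^1 : H_n'(x) > 1\}$ (the maximal positive-measure set on which $H_n' > H_1$).

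I would write: by Proposition \ref{lemma:monotonescheme}, we have $H_{n+1}'(x) \leq H_n'(x)$ for every $x \in \I^1$. Hence, if $x \in J_{n+1}$, then
\[
1 = H_1(x) < H_{n+1}'(x) \leq H_n'(x),
\]
so $x \in J_n$. This is the entire argument; the statement is a direct corollary of the previously established monotonicity of the $H_n'$ scheme.

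There is no real obstacle here — the only point requiring care is that the monotonicity in Proposition \ref{lemma:monotonescheme} was proved \emph{everywhere}, not merely almost everywhere, so the pointwise inequality $H_{n+1}' \leq H_n'$ applies to each individual $x \in J_{n+1}$ without any null-set adjustments. Consequently no additional measure-theoretic argument or use of hypotheses \ref{Hi}--\ref{Hviii} is needed for this proposition; it is essentially a formal bookkeeping lemma that sets up the second-case analysis to follow, where one will want to pass to the monotone limit of the shrinking sets $J_n$.
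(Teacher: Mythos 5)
Your argument is correct and is essentially identical to the paper's proof: both take $x \in J_{n+1}$, use the everywhere-pointwise monotonicity $H_{n+1}' \leq H_n'$ from Proposition \ref{lemma:monotonescheme} to get $H_n'(x) \geq H_{n+1}'(x) > H_1(x)$, and conclude $x \in J_n$. No further comment is needed.
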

\begin{proof}
Let $n \in \mathbb{N}^*, x \in J_{n+1}$. Then $H_{n+1}'(x)> H_1(x)$. By Proposition \ref{lemma:monotonescheme}, $H_{n}'(x) \geq H_{n+1}'(x) > H_1(x)$. Thus, $x \in J_n$. 
\end{proof}
We can then define 
\[\tilde{J} = {\displaystyle \lim_{n \to +\infty} J_n} ={\displaystyle \bigcap_{n \in \mathbb{N}^*} J_n},\]
and
\begin{equation}\label{defJ}
J = \left\lbrace x \in \tilde{J} \:|\: H'(x)>1 \right\rbrace.
\end{equation}
One has moreover \footnote{\linespread{1}\small Fortet seems to imply by this proposition that $H$ and $H'$ cannot vanish at a point without vanishing everywhere. Although this is true for $H'$, see Proposition \ref{prop:HH'0} below, it does not imply the same property for $H$.} the following inequality which is stated on \cite[p.92]{For}.
\begin{proposition}\label{prop:H'H}
\[
H \leq H'
\]
everywhere.
\end{proposition}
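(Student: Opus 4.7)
The plan is to show that in fact $H(x) = \min(1, H'(x))$ pointwise, from which $H \leq H'$ is immediate.

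First I would unfold the scheme (AS): since $H_1 \equiv 1$, for every $n \geq 1$ one has
\[
H_{n+1}(x) = \max\!\bigl(H_n''(x), \tfrac{1}{n+1}\bigr) = \max\!\Bigl(\min\!\bigl(1, H_n'(x)\bigr), \tfrac{1}{n+1}\Bigr).
\]
This is a purely algebraic identity and, crucially, expresses $H_{n+1}$ only in terms of $H_n'$ and constants. It is the key link between the $H$-sequence and the $H'$-sequence that will let me pass from what I know about $H'$ to a statement about $H$.

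Next I would pass to the pointwise limit. By Proposition \ref{lemma:monotonescheme} the sequence $\{H_n'(x)\}$ is non-increasing, and by Observation \ref{remark:finite} every $H_n'(x)$ is finite and non-negative; hence the pointwise limit $H'(x) = \lim_{n\to\infty} H_n'(x) \in [0,\infty)$ exists. Similarly $H(x) = \lim_n H_n(x)$ exists. Since $\min(1,\cdot)$ is continuous and monotone, $\min(1,H_n'(x)) \downarrow \min(1,H'(x))$, while $1/(n+1) \downarrow 0$. Therefore, taking the limit of the displayed identity above,
\[
H(x) \;=\; \lim_{n\to\infty} \max\!\Bigl(\min\!\bigl(1, H_n'(x)\bigr), \tfrac{1}{n+1}\Bigr) \;=\; \max\!\bigl(\min(1,H'(x)),\,0\bigr) \;=\; \min(1,H'(x)),
\]
where at the last step I use $H'(x) \geq 0$. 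The final inequality $\min(1,H'(x)) \leq H'(x)$ is trivial, yielding $H(x) \leq H'(x)$ for every $x \in \I^1$.

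There is essentially no obstacle here beyond justifying the termwise passage to the limit, which is immediate from monotonicity of both sequences and the continuity of $\max$ and $\min$ in their arguments. The identity $H = \min(1, H')$ obtained as a by-product is slightly stronger than the stated proposition and clarifies the footnote in the paper: indeed $H'(x) = 0$ forces $H(x) = 0$, but $H(x) = 0$ only requires $H'(x) = 0$, so vanishing of $H$ at a point does not in itself propagate to vanishing of $H'$ until one combines it with the separate claim (Proposition \ref{prop:HH'0}) that $H'$ is either identically zero or everywhere positive.
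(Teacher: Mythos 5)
Your proof is correct and takes essentially the same route as the paper: the paper passes to the limit $H''$ of the monotone sequence $H_n'' = \min(H_1, H_n')$ and concludes $H = H'' \leq H'$, which is precisely your identity $H = \min(1, H')$ in slightly less explicit form. The stronger pointwise identity is a legitimate by-product of the same monotone-limit argument (and, combined with Proposition \ref{prop:HH'0}, it even shows that vanishing of $H$ at a single point would force $H' \equiv 0$ and hence $H \equiv 0$).
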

\begin{proof}
By the scheme (\ref{approxscheme}) and Proposition \ref{lemma:monotonescheme}, the nonnegative sequence of $H_n''$ is also decreasing. Hence, it admits a limit $H''$.
By definition, $H_n = \max \left( H_{n-1}'',\frac{1}{n} \right)$.  Thus, the limits must be equal $H= H''$.
Since $H_n'' = \min(H_1, H_n') \leq H_n'$, we get, passing to the limit, that $H \leq H'$. \qed
\end{proof}
The following result shows that  $H'$ cannot vanish, otherwise we would fall back in the first case\footnote{\linespread{1}\small The statement can be found on \cite[p.92]{For}. The proof there provided, however, appears to be incorrect as it does not make use of hypothesis  \eqref{cond:thm1} confusing $H_n'$ of the iteration (\ref{approxscheme}) with $H'_{|n}$ (also denoted by $H'_n$ by Fortet) defined in (\ref{newH_n'}).}.

\begin{proposition} \label{prop:HH'0}
Assume that there exists some $x_0 \in \I^1$ such that $H'(x_0) = 0$.
Then the sequence $H_n'$ converges uniformly to $0$ on $\I^1$. In particular, $H' \equiv 0$.
\end{proposition}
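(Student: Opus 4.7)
The plan is to pass to the limit inside the integral defining $H_n'(x)$, exploiting the monotonicity of the scheme and using condition \eqref{cond:thm1} to furnish an integrable dominant that is independent of $x$. This is precisely the ingredient the footnote flags as missing from Fortet's own proof: the hypothesis $(\star)$ is what upgrades convergence from pointwise to uniform.

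First, I would record the crucial monotonicities. By Proposition \ref{lemma:monotonescheme}, $H_n \downarrow H$ pointwise, so $\omega_1/H_n \uparrow \omega_1/H$ and, by monotone convergence in $z$, the inner integral $G(H_n,y) = \int_{\I^1} g(z,y)\omega_1(z)/H_n(z)\,dz$ is non-decreasing in $n$ with limit $G(H,y)$, possibly $+\infty$. Consequently $\omega_2(y)/G(H_n,y)$ is non-increasing in $n$ and bounded above by $\omega_2(y)/G(H_1,y) = \omega_2(y)\big/\int_{\I^1}g(z,y)\omega_1(z)\,dz$, which is integrable on $\I^2$ exactly by \eqref{cond:thm1}. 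Applying dominated convergence to the integral defining $H_n'(x) = \Omega(H_n)(x)$ therefore gives, for every $x\in\I^1$,
\[
H'(x) = \lim_{n\to\infty} H_n'(x) = \int_{\I^2} g(x,y)\,\frac{\omega_2(y)}{G(H,y)}\,dy,
\]
with the convention that the integrand is $0$ wherever $G(H,y) = +\infty$.

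Next, I would feed the hypothesis $H'(x_0)=0$ into this formula to deduce that $g(x_0,y)\,\omega_2(y)/G(H,y)=0$ for almost every $y\in\I^2$. Since by \ref{Hvi} the function $y\mapsto g(x_0,y)$ vanishes only on a null set, this forces $\omega_2(y)/G(H,y) = 0$ for a.e.\ $y$; equivalently, $G(H,y) = +\infty$ for almost every $y$ in $\{\omega_2 > 0\}$.

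Finally, to upgrade to uniform convergence, I would use the global bound $g\le\Sigma$ from \ref{Hv} to obtain the $x$-free estimate
\[
H_n'(x) \;\le\; \Sigma \int_{\I^2} \frac{\omega_2(y)}{G(H_n,y)}\,dy
\]
for every $x\in\I^1$. The scalar integral on the right is handled by dominated convergence with exactly the same dominant as before, and its limit is $\Sigma\int_{\I^2}\omega_2(y)/G(H,y)\,dy = 0$ by the previous paragraph. Hence $\sup_{x\in\I^1} H_n'(x) \to 0$, which is the uniform convergence claimed, and in particular $H'\equiv 0$. The only conceptual subtlety is that $H$ may vanish on a set of positive measure so that $G(H,y)$ can be infinite; this is harmless because the argument is anchored throughout on the integrable dominant supplied by \eqref{cond:thm1}, and that is also the step where Fortet's own proof breaks down.
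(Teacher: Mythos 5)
Your argument is correct, but it takes a genuinely different route from the paper's. The paper proceeds in two stages: it first proves (statement \eqref{claim:measure}, established in the Appendix) that the measures $\omega_2(y)/G(H_n,y)\,dy$ converge to zero in total variation, by splitting $\I^2$ according to the sublevel sets of $g(x_0,\cdot)$ --- where $g(x_0,y)\ge 1/k$ the mass is controlled by $k\,H_n'(x_0)\to 0$, and where $g(x_0,y)<1/k$ one invokes \ref{Hvi}, the continuity of $g$ from \ref{Hiv}, and the absolute continuity of the dominating measure $\omega_2(y)/G(H_1,y)\,dy$ supplied by \eqref{cond:thm1} --- and then, for arbitrary $x$, exhausts $\I^2$ by compacts $\I^2_q$, using the total-variation convergence on $\I^2_q$ and the \eqref{cond:thm1}-integrable tail off $\I^2_q$ to obtain uniformity. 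You instead exploit the monotonicity of the scheme (Proposition \ref{lemma:monotonescheme}) directly: $H_n\downarrow H$ gives $G(H_n,y)\uparrow G(H,y)$ by monotone convergence, dominated convergence with the dominant $\Sigma\,\omega_2(y)/G(H_1,y)$ identifies $H'(x)=\int_{\I^2} g(x,y)\,\omega_2(y)/G(H,y)\,dy$, the hypothesis at $x_0$ together with \ref{Hvi} forces $\omega_2(\cdot)/G(H,\cdot)=0$ a.e., and the crude bound $g\le\Sigma$ from \ref{Hv} yields the $x$-free estimate $\sup_{x}H_n'(x)\le \Sigma\int_{\I^2}\omega_2(y)/G(H_n,y)\,dy\to 0$. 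Your version bypasses the total-variation lemma and the compact exhaustion entirely, needs neither the continuity of $g$ nor the closedness of $\{y: g(x_0,y)=0\}$ at this step, and makes transparent that the only inputs are the monotonicity of the scheme, \ref{Hv}, \ref{Hvi} at the single point $x_0$, and the integrable dominant from \eqref{cond:thm1}; the conventions you adopt where $G(H,y)=+\infty$ or $H$ vanishes are exactly the right ones and cause no trouble. What the paper's longer route buys is the intermediate statement \eqref{claim:measure}, a quantitative fact about the measures that stands on its own, and a compact-exhaustion template consistent with how the rest of Fortet's argument handles unbounded domains; for the proposition itself, your shorter proof is sound.
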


\begin{proof}
Assume that there exists some $x_0 \in \I^1$ such that $H'(x_0) = 0$. By definition of $H'$, this implies that the sequence $H_n'(x_0)$ converges to 0, i.e.:
\[
H_n'(x_0) = \int_{\I^2} g(x_0,y) \frac{\omega_2(y)}{G(H_n,y)} dy \to 0, \quad \text{ as } n \to +\infty.
\]
This implies that 
\begin{equation} \label{claim:measure}
\text{The measure } \frac{\omega_2(y)}{G(H_n,y)}dy  \text{ converges strongly (in total variation norm) to } 0 \text{ on } \I^2.
\end{equation} 
The proof of the above statement can be found in the Appendix. Now pick any $x \in \I^1$. We know from \ref{remark:finite} that $H_n'(x)$ is finite. In particular, approximating $\I^2 \supseteq ... \supseteq \I^2_q \supseteq ... \supset \I^2_1$ by compact sets $\I^2_q$, one can write for $q \in \mathbb{N}^*$:
\begin{equation}\label{eq:intH'n}
H_n'(x) = \int_{\I^2_q} g(x,y) \frac{\omega_2(y)}{G(H_n,y)} dy + \int_{\I^2 \backslash \I^2_q} g(x,y) \frac{\omega_2(y)}{G(H_n,y)} dy.
\end{equation}
By boundedness of $\I^2_q$  and \eqref{claim:measure}, the first integral
\[
\int_{\I^2_q} g(x,y) \frac{\omega_2(y)}{G(H_n,y)} dy \leq \Sigma \int_{\I^2_q} \frac{\omega_2(y)}{G(H_n,y)} dy
\]
converges uniformly in $x$ to $0$ as $n \to +\infty$.
As for the second integral, notice that $H_n \leq H_1$ from Proposition \ref{lemma:monotonescheme}. Hence
\[
\int_{\I^2 \backslash \I^2_q} g(x,y) \frac{\omega_2(y)}{G(H_n,y)} dy \leq \int_{\I^2 \backslash \I^2_q} g(x,y) \frac{\omega_2(y)}{G(H_1,y)} dy \leq \Sigma \int_{\I^2 \backslash \I^2_q} \frac{\omega_2(y)}{G(H_1,y)} dy
\]
which can be made, uniformly in $x$, arbitrarily small  when $q \to +\infty$, by absolute continuity of the measure $ \frac{\omega_2(y)}{G(H_1,y)}$ with respects to the Lebesgue measure, thanks to condition \eqref{cond:thm1}.
We therefore conclude the uniform convergence of the sequence of $H_n'$ to $0$.
\end{proof}

It follows from Proposition \ref{prop:HH'0} that if $H'$ vanishes at one point, $H_n'$ converges uniformly to $0$. In that case, for $n$ large enough, we would have for every $x$, $H'(x) \leq 1 = H_1$. We would namely be in the first case. We can then conclude that, in this second case, we necessarily have $H' > 0$ everywhere.
To prove that $H'$ satisfies (\ref{eq:h'}), we shall show  that, although we do not have $H_{n}' \leq 1$ a.e. for some $n$, this holds for the limit $H' $.  The rest of the proof will then be similar to the first case provided we can show that the set $J$ has zero measure. This is stated, followed by a very sketchy proof by contradiction, on \cite[p.93]{For}.
\begin{proposition}\label{prop:Jmeas0}
The set $J$ defined in (\ref{defJ}) has measure $0$.
\end{proposition}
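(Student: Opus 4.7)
My plan is a proof by contradiction. Assume $|J| > 0$ and derive a contradiction from the identity
$$\int_{\I^1} \frac{H_n'(x)}{H_n(x)}\, \omega_1(x)\, dx = 1, \qquad n \geq 1,$$
supplied by item iv) of the Class (C) Lemma (each $H_n$ is of class (C) with positive lower bound $1/n$), combined with Fatou's Lemma in the limit.

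The first task is to pin down $H$ and $H'$ separately on $\tilde J$ and its complement. On $\tilde J$ every iterate satisfies $H_n' > 1$, so the truncation forces $H_n'' = \min(1, H_n') = 1$ and hence $H_n \equiv 1$ for $n \geq 2$; thus $H \equiv 1$ on $\tilde J$. Off $\tilde J$, there is some index $n_0(x)$ with $H_{n_0}'(x) \leq 1$, and Proposition~\ref{lemma:monotonescheme} propagates this to $n \geq n_0$, so eventually the $\min$-step is vacuous, $H_{n+1}(x) = H_n'(x)$, and $H = H'$ on $\I^1 \setminus \tilde J$. Proposition~\ref{prop:HH'0} ensures $H > 0$ everywhere, so the pointwise limit $H'/H$ is well-defined and equals $H'$ on $\tilde J$ and $1$ on $\I^1 \setminus \tilde J$; in particular $H'/H > 1$ precisely on $J$. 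The same case analysis shows $H_n'/H_n \to H'/H$ pointwise everywhere.

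Fatou's Lemma then gives
$$\int_{\I^1} \frac{H'}{H}\, \omega_1\, dx \;\leq\; \liminf_n \int_{\I^1} \frac{H_n'}{H_n}\, \omega_1\, dx = 1,$$
while splitting along $J$ and using the closed form of the ratio,
$$\int_{\I^1} \frac{H'}{H}\, \omega_1\, dx = \int_{\I^1 \setminus J} \omega_1\, dx + \int_J H'\, \omega_1\, dx = 1 + \int_J (H' - 1)\, \omega_1\, dx.$$
Combining, $\int_J (H'-1)\omega_1 \leq 0$, and since $H' > 1$ strictly on $J$ with $\omega_1 \geq 0$, this forces $\omega_1 = 0$ almost everywhere on $J$, i.e.\ $|J \cap A| = 0$ where $A = \{\omega_1 > 0\}$.

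The principal obstacle is promoting $|J \cap A| = 0$ to the full statement $|J| = 0$, since a priori $J$ could hide mass inside $\{\omega_1 = 0\}$. The route I would pursue is to pass to the limit in $H_n' = \Omega(H_n)$: monotone convergence (the integrands in $G(H_n, y)$ are non-decreasing in $n$) together with dominated convergence based on (\ref{cond:thm1}) and \ref{Hv} yields $H' = \Omega(H)$ on all of $\I^1$. Then, using that $\Omega$ depends on its input only through its restriction to $A$, combined with the continuity of $g, \omega_2$ and the positivity assumptions \ref{Hvi}--\ref{Hvii}, one argues that $H' > 1$ on a positive-measure subset of $\I^1 \setminus A$ is incompatible with the explicit integral representation of $H'$, delivering the full conclusion. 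I expect this final bookkeeping--bridging the integral identity with the set-level statement--to be the most delicate step.
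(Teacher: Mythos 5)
Your first two paragraphs are correct and amount to a streamlined version of the paper's own argument: the identification $H\equiv 1$ on $\tilde J$ (hence $H=1<H'$ on $J$), $H=H'$ off $J$, positivity of $H$ in this second case via Proposition \ref{prop:HH'0}, and the passage to the limit in $\int_{\I^1}(H_n'/H_n)\,\omega_1\,dx=1$. Where the paper splits along $J_{n-1}$, bounds the ratio by $1$ off $J_{n-1}$, and combines bounded convergence with monotonicity over $\tilde J$, your single application of Fatou reaches the same bound $\int_{\I^1}(H'/H)\,\omega_1\,dx\le 1$ more economically. (Minor imprecision: off $\tilde J$ the $\min$-step eventually becomes vacuous but the $\max$-step does not, i.e.\ $H_{n+1}(x)=\max\bigl(H_n'(x),\tfrac{1}{n+1}\bigr)$; the conclusion $H=H'$ there still follows in the limit.)

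The genuine problem is the last step, and your suspicion that it is the delicate one is right: it cannot be carried out, because the literal statement $|J|=0$ does not follow from (H) and \eqref{cond:thm1} when $\omega_1$ vanishes on part of $\I^1$. Since $\Omega$ sees its argument only through $\omega_1/H$, nothing forbids $H'>1$ from persisting on a positive-measure subset of $\{\omega_1=0\}$. Concretely, take $\omega_1=\omega_2$ a continuous density supported in $[0,1]$ and a continuous kernel with $g\equiv\eps$ on $[0,1]^2$, $g\equiv\tfrac12$ on $[5,6]\times[0,1]$, positive and bounded elsewhere; then $H_1'\equiv 1$ on $[0,1]$, the scheme stalls ($H_n'=H_1'$ for all $n$, so we are in the second case), and $H'=H_1'=\tfrac{1}{2\eps}>1$ on $[5,6]$, so $J\supseteq[5,6]$ has positive measure even though $H'$ is a perfectly good fixed point of $\Omega$. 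In particular your proposed route (showing $H'>1$ on a positive-measure subset of $\I^1\setminus A$ is ``incompatible with the integral representation'') is refuted by this example. What your Fatou argument does prove, namely $\omega_1=0$ a.e.\ on $J$, i.e.\ $|J\cap A|=0$, is in fact the correct and best-possible conclusion; it is also what the paper's proof actually establishes (the strict inequality \eqref{eq:intH'H} tacitly assumes $J$ carries $\omega_1$-mass), and it is all that is needed downstream: $|J\cap A|=0$ gives $H'\le 1$ and hence $H=H'$ a.e.\ on $A$, so $G(H,\cdot)=G(H',\cdot)$ and $H'=\Omega(H)=\Omega(H')$. So rather than chasing $|J|=0$, you should either prove the proposition in this $A$-restricted form, or note that under the extra hypothesis $\omega_1>0$ a.e.\ your argument does yield $|J|=0$.
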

\begin{proof}
Assume that it is not the case. Then one has for $x \in J$, $H'(x)>H_1(x)=1$.
The scheme (\ref{approxscheme}) thus yields $H''(x) = \min(H_1(x),H'(x)) = 1$, and hence 
\begin{equation}\label{eq:HJ}
H(x) = \max(H''(x),0) = 1 < H'(x),\quad \forall x \in J,
\end{equation} 
as well as 
\begin{equation}\label{eq:HJ2}
H(x) = \max(H''(x),0) = 1 \leq H'(x),\quad \forall x \in \tilde{J}.
\end{equation}

Similarly, for $x \in \I^1 \backslash J$, one has from the approximation scheme $H''(x) = \min(H_1(x), H'(x)) = H'(x)$, and hence
\begin{equation}\label{eq:HIJ}
H(x) = \max(H''(x),0) = H'(x), \quad \forall x \in \I^1 \backslash J
\end{equation} 

From \eqref{eq:HJ}, \eqref{eq:HIJ} and the fact that $J$ has positive measure, it follows  that:
\begin{equation}\label{eq:intH'H}
\int_{\I^1} \frac{H'(x)}{H(x)} \omega_1(x) dx = \int_{J} \frac{H'(x)}{H(x)} \omega_1(x) dx + \int_{\I^1\backslash J} \frac{H'(x)}{H(x)} \omega_1(x) dx > \int_{\I^1} \omega_1(x) dx = 1.
\end{equation}
Recall now that for $n \in \mathbb{N}^*$, $H_n$ is of class (C), and hence, by \ref{lemma:classC} iv) we have that:
\begin{equation}\label{eq:intHn'Hn}
\int_{\I^1} \frac{H_n'(x)}{H_n(x)} \omega_1(x) dx = 1.
\end{equation}
The strategy consists in passing to the limit in the above equation and derive a contradiction with \eqref{eq:intH'H}. However, passing to the limit is delicate, hence we consider the following decomposition: 
\begin{align}
1 = \int_{\I^1} \frac{H_n'(x)}{H_n(x)} \omega_1(x) dx &=\int_{\I^1\backslash J_{n-1}} \frac{H_n'(x)}{H_n(x)} \omega_1(x) dx + \int_{J_{n-1}} \frac{H_n'(x)}{H_n(x)} \omega_1(x) dx \\
 &= \int_{\I^1\backslash J_{n-1}} \frac{H_n'(x)}{H_n(x)} \omega_1(x) dx + \int_{J_{n-1}} H_n'(x) \omega_1(x) dx \label{eq:intHn'Hn2}
\end{align}
since again by the scheme, one has for $x \in J_{n-1}$: $H_{n-1}''(x) = 1$ and, consequently, $H_n(x)=1$.
Now notice that by monotonicity of $J_n$'s (Proposition \ref{prop:Jn}) and of $H_n'$ (Proposition \ref{lemma:monotonescheme}), one has:
\begin{equation}\label{eq:intJn}
\int_{J_{n-1}} H_n'(x) \omega_1(x) dx \geq \int_{\tilde{J}} H_n'(x) \omega_1(x) dx \geq \int_{\tilde{J}} H'(x) \omega_1(x) dx = \int_{\tilde{J}} \frac{H'(x)}{H(x)} \omega_1(x) dx
\end{equation}
where the last equality holds because of \eqref{eq:HJ2}.\\
Let us now focus on the second integral: 
\[
\int_{\I^1\backslash J_{n-1}} \frac{H_n'(x)}{H_n(x)} \omega_1(x) dx  = \int_{\I^1} \1_{\I^1\backslash J_{n-1}}(x) \frac{H_n'(x)}{H_n(x)} \omega_1(x) dx 
\]
Notice that for $x \in \I^1\backslash J_{n-1} $, we have $H_{n-1}'(x) \leq 1$.
We then get from the scheme (\ref{approxscheme}) that $H_{n-1}''(x) = H_{n-1}' (x)$. It follows that
\[
\text{ either } H_n(x) = H_{n-1}'(x), \quad \text{ or } \quad H_n(x) = \frac{1}{n} \text{ in the case } H_{n-1}'(x) \leq \frac{1}{n}
\]
In any case one has
\[
H_n(x) \geq H_{n-1}'(x).
\]
By Proposition \ref{lemma:monotonescheme},  $H_{n-1}'(x) \geq H_{n}'(x)$. We get that
\[
\frac{H_{n}'(x)}{H_{n}(x)} \leq 1, \quad \forall x \in \I^1\backslash J_{n-1}.
\]
The bounded convergence theorem allows us to conclude that
\begin{equation}\label{eq:limintH'H}
\lim_{n \to +\infty} \int_{\I^1\backslash J_{n-1}} \frac{H_n'(x)}{H_n(x)} \omega_1(x) dx = \int_{\I^1} \lim_{n \to +\infty} \1_{\I^1\backslash J_{n-1}}(x) \frac{H_n'(x)}{H_n(x)} \omega_1(x) dx = \int_{\I^1\backslash \tilde{J}} \frac{H'(x)}{H(x)} \omega_1(x) dx.
\end{equation}
Using \eqref{eq:intJn} and \eqref{eq:limintH'H} into \eqref{eq:intHn'Hn2}, one gets when passing to the limit in $n$ that
\[
1 \geq \int_{\tilde{J}} \frac{H'(x)}{H(x)} \omega_1(x) dx + \int_{\I^1\backslash \tilde{J}} \frac{H'(x)}{H(x)} \omega_1(x) dx = \int_{\I^1} \frac{H'(x)}{H(x)} \omega_1(x) dx
\]
which contradicts \eqref{eq:intH'H}.
\end{proof}

Now that we know that $J$ is of measure $0$, we are ready to  show that $H'$ is indeed a solution of \eqref{eq:h'}. Since we have $H_n' = \Omega(H_n)$, Beppo-Levi's monotonce convergence theorem implies that $H' =\Omega(H)$. Since $J$ is of measure $0$, we have that $H' \leq H_1 =1 $ a.e.. By the definition $H_n'' = \min(H_1, H_n')$, passing to the limit we then get $H'' = H'$ a.e.. This, together with $H_n = \max\left(H_{n-1}'',\frac{1}{n}\right)$, also gives $H = H'' = H'$ a.e. . We conclude that everywhere:
\[
H' = \Omega(H) = \Omega(H'), 
\] 
which proves that $H'$ is solution to \eqref{eq:h'}.

\subsubsection{Conclusion}
To summarize, in both cases we found a measurable solution $h$ of \eqref{eq:h'} ($h = K'$ in the first case, $h = H'$ in the second case) such that we have everywhere
\[
0 < h \leq 1.
\] 
Moreover, we have continuity of the solution. This is stated with a sketch of the proof on \cite[p.95]{For}.
\begin{proposition}\label{prop:hcont}
$h$ is continuous on $\I^1$.
\end{proposition}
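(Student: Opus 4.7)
The plan is to use the representation $h = \Omega(h)$ established at the end of both cases, write
\[
h(x) = \int_{\I^2} g(x,y)\,\frac{\omega_2(y)}{G(h,y)}\,dy,
\]
and prove continuity in $x$ by the dominated convergence theorem. Fix $x_0 \in \I^1$ and let $(x_n) \subset \I^1$ with $x_n \to x_0$. By assumption \ref{Hiv}, $g(x_n,y) \to g(x_0,y)$ pointwise in $y$, so if we can produce an integrable dominating function in $y$ (uniform in $n$) for the integrands $g(x_n,y)\,\omega_2(y)/G(h,y)$, we conclude $h(x_n) \to h(x_0)$, i.e.\ continuity at $x_0$.

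The main obstacle is to justify integrability of $\omega_2(y)/G(h,y)$ in $y$. Although $G(h,y) > 0$ everywhere (since $h \leq 1$ and condition (H.vii) forces the integral defining $G(h,y)$ to be strictly positive), it could potentially decay at infinity, so we need a quantitative lower bound. Here the crucial fact, obtained in the last step of each case, is that $h \leq 1$ everywhere on $\I^1$. Consequently $\omega_1(z)/h(z) \geq \omega_1(z)$, and therefore
\[
G(h,y) = \int_{\I^1} g(z,y)\,\frac{\omega_1(z)}{h(z)}\,dz \;\geq\; \int_{\I^1} g(z,y)\,\omega_1(z)\,dz.
\]
Taking reciprocals and multiplying by $\omega_2(y)$ yields
\[
\frac{\omega_2(y)}{G(h,y)} \;\leq\; \frac{\omega_2(y)}{\int_{\I^1} g(z,y)\,\omega_1(z)\,dz},
\]
and the right-hand side is integrable on $\I^2$ precisely by the standing hypothesis \eqref{cond:thm1}.

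With this bound in hand, \ref{Hv} gives $g(x_n,y) \leq \Sigma$ for all $n$, so the integrands are dominated by the fixed integrable function $\Sigma\,\omega_2(y)/\int_{\I^1} g(z,y)\omega_1(z)\,dz$. Dominated convergence then yields $h(x_n) \to h(x_0)$, proving continuity of $h$ at $x_0$. Since $x_0$ was arbitrary, $h$ is continuous on $\I^1$, as required. The only real subtlety is the a priori bound $h \leq 1$, which is exactly what allowed the comparison with the function appearing in \eqref{cond:thm1}; the rest is routine application of continuity of $g$ and dominated convergence.
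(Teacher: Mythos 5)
Your proof is correct and follows essentially the same route as the paper: both rest on the key bound $h \leq 1$, which gives $\omega_2(y)/G(h,y) \leq \omega_2(y)/G(H_1,y)$, integrability of the dominating function via condition \eqref{cond:thm1} together with \ref{Hv}, and then dominated convergence combined with the continuity of $g$ from \ref{Hiv}. The only cosmetic difference is that the paper bounds $|h(x_1)-h(x_2)|$ directly while you apply dominated convergence along a sequence $x_n \to x_0$; the substance is identical.
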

\begin{proof}
Recall the definition
\[
G(H,y) \equiv \int_{\I^1} g(z,y) \frac{\omega_1(z)}{H(z)} dz
\]

Since $h \leq 1= H_1$ everywhere in $x$, we get $\frac{\omega_2(y)}{G(h,y)} \leq \frac{\omega_2(y)}{G(H_1,y)}$ everywhere in $y$.
Then, for $x_1,x_2 \in \I^1$, 
\[
|h(x_1) - h(x_2)| \leq \int_{\I^2} |g(x_1,y)-g(x_2,y)| \frac{\omega_2(y)}{G(h,y)} dy\leq \int_{\I^2} |g(x_1,y)-g(x_2,y)| \frac{\omega_2(y)}{G(H_1,y)} dy
\] 
From \ref{Hv}, $|g(x_1,y)-g(x_2,y)| \frac{\omega_2(y)}{G(H_1,y)} \leq 2 \Sigma \frac{\omega_2(y)}{G(H_1,y)}$, which is integrable by \eqref{cond:thm1}.
Thus one can use the dominated convergence theorem to deduce that
\[
\lim_{x_2 \to x_1}|h(x_1) - h(x_2)| \leq \int_{\I^2} \lim_{x_2 \to x_1} |g(x_1,y)-g(x_2,y)| \frac{\omega_2(y)}{G(H_1,y)} dy = 0
\]
from the continuity of $g$.

\end{proof}
We now reformulate the existence results and the properties of $h$ in terms of the original variables $(\phi,\psi)$.
Since $0<h\leq 1$ everywhere, equation \eqref{eq:hphi} defines a proper measurable function $\phi$ on $\I^1$, non-negative and only vanishing for values $x$ where $\omega_1(x) = 0$, which is moreover continuous from \ref{Hviii}, Proposition \ref{prop:hcont} and the fact that $h>0$.
This proves Theorem \ref{thm:ex1}.ii),iii) and the existence of $\phi$ solution of (\ref{SchrodingerSystem2}1).
Given such a $\phi$, one can define a measurable solution $\psi$ from (\ref{SchrodingerSystem2}2). By property $\phi \geq 0$, \ref{Hi},\ref{Hii}, we have that $\psi \geq 0$, which proves Theorem \ref{thm:ex1}.i),iv).
It remains to establish Theorem \ref{thm:ex1}.v).
Let $A' = \{ y \in \I^2 | \omega_2(y) > 0 \}$, and $A'' = \{ y \in A' | \psi(y) = 0\} \subseteq A'$. The goal is to show that $A''$ has measure $0$.
To this end, we compute:
\[
\int_{\I^2} g(x,y) \psi(y) dy = \int_{A' \backslash A''} g(x,y) \psi(y) dy
\]
since by (\ref{SchrodingerSystem2}2), $\psi = 0$ outside of $A'$, and by definition of $A''$, $\psi = 0$ on $A''$. We can then multiply the above equation by $\phi(x)$ and integrate over $\I^1$. Since all functions involved are non-negative and measurable, one can decide the order of integration by Fubini-Tonnelli. On the one hand, we have
\begin{align*}
\int_{\I^1} \phi(x) \left[\int_{A' \backslash A''} g(x,y) \psi(y) dy \right] dx&= \int_{\I^1} \phi(x)\left[\int_{\I^2} g(x,y) \psi(y) dy \right] dx\\
&= \int_{\I^2} \psi(y) \left[\int_{\I^1} \phi(x) g(x,y) dx\right] dy \\
&= \int_{\I^2} \omega_2(y) dy,\quad \text{ from (S'2)} \\
&= 1
\end{align*}
On the other hand, we get:
\begin{align*}
\int_{\I^1} \phi(x)\left[\int_{A' \backslash A''} g(x,y) \psi(y) dy \right] dx&= \int_{A' \backslash A''} \psi(y) \left[\int_{\I^1} \phi(x) g(x,y) dy \right] dx \\
&= \int_{A' \backslash A''} \omega_2(y) dy, \quad \text{ from (S'2)} 
\end{align*}
We deduce that 
\[
\int_{A' \backslash A''} \omega_2(y) dy = \int_{A'} \omega_2(y) dy
\]
which is only possible if $A''$ has measure $0$, since $\omega_2>0$ on $A'' \subset A'$.
This concludes the proof of Theorem \ref{thm:ex1}. \qed
\section{Second Existence Theorem and Uniqueness Theorem}\label{Existence II}
In \cite[Section 3, pp. 97-102]{For}, Fortet proceeds to derive an existence theorem for System (\ref{SchrodingerSystem}) still under hypotheses \ref{Hi}-\ref{Hviii} but without assuming the integrability condition (\ref{cond:thm1}). The latter condition is replaced by the assumption that the kernel function $g(x,y)$ be of class (B) \cite[p. 97]{For}. The latter property appears in general hard to check. We have therefore decided to present only a special case of the second existence theorem where this property can be readily verified. 
\begin{theoremR} {\em \cite[p. 101]{For}}\label{thm:ex2}
Suppose $\I^1=\I^2=\R$ and that $g(x,y)=U(x-y)$ only depends on the difference $t=x-y$. Assume, moreover, that one of the following conditions is met: $1)$ there exist $T_1\le T_2$ such that for $t\le T_1$ $U(t)$ is non decreasing and for $t\ge T_2$  it is non increasing  $2)$ there exist $T_1\le T_2$ such that for $t\le T_1$ $U(t)$  is non increasing and  for $t\ge T_2$ it is non decreasing. Assume, finally, {\em \ref{Hi}-\ref{Hviii}}. Then system (\ref{SchrodingerSystem}) admits a solution $(\varphi(x),\psi(y))$. The function $\varphi(x)$ is zero for the values $x$ for which $\omega_1(x)$ is zero. On the complement, $\varphi$ is strictly positive and continuous. The non negative function $\phi(y)$ is measurable and equal to zero, up to a zero measure set, only for the values $y$ where $\omega_2(y)=0$.
\end{theoremR}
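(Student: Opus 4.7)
The plan is to reduce the theorem to Theorem \ref{thm:ex1} via a truncation argument. For each $n\in\N^*$ set $\omega_2^{(n)}(y) = \omega_2(y)\,\1_{[-n,n]}(y)/c_n$ with $c_n = \int_{-n}^{n}\omega_2$, so that $\omega_2^{(n)}$ is a probability density with compact support and $\omega_2^{(n)}\to\omega_2$ in $L^1$. For the truncated problem with marginals $(\omega_1,\omega_2^{(n)})$, the denominator
\[
D(y) := \int_{\R} U(z-y)\,\omega_1(z)\,dz
\]
is continuous (by \ref{Hiv}, \ref{Hv} and dominated convergence) and strictly positive (by \ref{Hvii}), hence bounded below on the compact support of $\omega_2^{(n)}$. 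Therefore \eqref{cond:thm1} holds for the truncated problem, and Theorem \ref{thm:ex1} yields a solution $(\phi_n,\psi_n)$. The associated $h_n = \omega_1/\phi_n$ is continuous and satisfies $0 < h_n \leq 1$ everywhere.

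Next I would extract a nondegenerate limit of $\{h_n\}$. Applying \eqref{eq:h} to $h_n$ and using $h_n\leq 1$ one has
\[
|h_n(x_1) - h_n(x_2)| \leq \int_{\R} |U(x_1-y) - U(x_2-y)|\,\frac{\omega_2^{(n)}(y)}{D_n(y)}\,dy,
\]
with $D_n(y) = \int U(z-y)\omega_1(z)/h_n(z)\,dz \ge D(y)$. The monotonicity of $U$ outside $[T_1,T_2]$ is used to compare $U(x_1-y)$ and $U(x_2-y)$ for $y$ in the tails, giving a tail bound uniform in $n$; combined with uniform continuity of $U$ on compacts (\ref{Hiv}), this delivers equicontinuity of $\{h_n\}$ on compact subsets of $\R$. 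Non-degeneration, i.e.~$h\not\equiv 0$ for any subsequential limit $h$, is obtained by a variant of Proposition \ref{prop:HH'0}, where again the monotonicity of $U$ supplies the tail uniformity that replaces \eqref{cond:thm1}. Arzelà-Ascoli then produces a continuous limit $h$ with $0 < h \leq 1$ along a subsequence.

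Finally, pass to the limit in the fixed point equation $h_n = \Omega_n(h_n)$, where $\Omega_n$ denotes $\Omega$ with $\omega_2$ replaced by $\omega_2^{(n)}$, to conclude $h = \Omega(h)$. The interchange of limit and integral uses $\omega_2^{(n)}\to\omega_2$ in $L^1$, local uniform convergence $h_n\to h$, pointwise convergence $D_n\to D^\ast(y) := \int U(z-y)\omega_1(z)/h(z)\,dz$, and the monotone tail estimate. Setting $\phi = \omega_1/h$ and defining $\psi$ via the second equation of \eqref{SchrodingerSystem2}, the remaining properties (continuity and positivity of $\phi$ off $\{\omega_1=0\}$, measurability and nonnegativity of $\psi$, and $\psi>0$ a.e.~on $\{\omega_2>0\}$) follow exactly as in the concluding arguments of the proof of Theorem \ref{thm:ex1}.

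The main obstacle is the passage to the limit: in the absence of \eqref{cond:thm1} for the original marginals, the only available source of uniform control is the monotonicity of $U$ at infinity, and extracting from it the precise tail bounds needed for both equicontinuity and dominated convergence is delicate. The case distinction in the statement is essential here, since the two monotonicity patterns produce qualitatively different tail behaviors for $U$ (bump-shaped with finite limits in Case 1 versus valley-shaped in Case 2) and require separate comparison arguments for the integrands $U(x_i-y)\omega_2^{(n)}(y)/D_n(y)$ in the tails.
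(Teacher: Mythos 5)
Your reduction to Theorem \ref{thm:ex1} via truncation of $\omega_2$ is sound as far as it goes (condition \eqref{cond:thm1} does hold for each truncated problem, since the denominator $D(y)=\int U(z-y)\omega_1(z)\,dz$ is continuous and positive, hence bounded below on $[-n,n]$), but the entire difficulty of the theorem is concentrated in the limit $n\to\infty$, and that is exactly the part you do not carry out. The appeal to ``a variant of Proposition \ref{prop:HH'0}'' for non-degeneracy does not work: the proof of that proposition uses \eqref{cond:thm1} in an essential way (absolute continuity of the finite measure $\omega_2(y)/G(H_1,y)\,dy$, which is finite precisely because of \eqref{cond:thm1}), and it controls the iterates of a single scheme, not a family $\{h_n\}$ of fixed points of different operators $\Omega_n$ with varying marginals. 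Without \eqref{cond:thm1}, $\int_{\I^2}\omega_2/D$ may diverge, so the only available domination $\omega_2^{(n)}/D_n\le \omega_2/(c_nD)$ is by a possibly non-integrable function; nothing in your argument rules out that the mass of $\omega_2^{(n)}/D_n\,dy$ escapes to infinity and $h_n\to0$ locally uniformly, or that a subsequential limit $h$ vanishes somewhere on $\{\omega_1>0\}$, in which case $\phi=\omega_1/h$ is not defined. Similarly, the claimed equicontinuity rests on comparing $U(x_1-y)$ and $U(x_2-y)$ in the tails, but in both cases $U$ is merely monotone and bounded there: the difference tends to $0$ with no rate, while the measure $\omega_2/D\,dy$ may carry infinite mass in the tails, so the tail bound ``uniform in $n$'' is asserted, not proved. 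Finally, even granting a positive continuous limit, passing to the limit in $h_n=\Omega_n(h_n)$ requires $D_n(y)\to\int U(z-y)\omega_1(z)/h(z)\,dz$, which needs uniform-in-$n$ control of $1/h_n$ over the region where $\omega_1$ lives; you only know $h_n>0$ pointwise for each $n$ ($h_n$ is not of class (C)), so this interchange is also unjustified. In short, every step that would have to replace \eqref{cond:thm1} by the monotonicity of $U$ is left as a declaration of intent.

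For comparison, the paper (following Fortet) does not truncate $\omega_2$ at all. It introduces a continuous, strictly positive weight $\rho$ such that $\int_{\I^1}\omega_1(x)\rho(x)\bigl[\int_{\I^2}g(x,z)\omega_2(z)\,dz\bigr]^{-1}dx<+\infty$, i.e.\ it forces the integrability condition for the modified marginal $\omega_1\rho$ with the roles of the marginals exchanged; Theorem \ref{thm:ex1} then gives a solution $(\bar\phi,\bar\psi)$ of the $\rho$-modified system, and a second fixed-point iteration with the operator $\bar\Omega(H)(x)=\frac{\bar\phi(x)}{\omega_1(x)}\int_{\I^2}g(x,y)\,\omega_2(y)\bigl[\int_A g(z,y)\bar\phi(z)/H(z)\,dz\bigr]^{-1}dy$, run with the same monotone machinery as in the Lemma and Theorem \ref{thm:ex1}, strips off the weight and yields $\phi=\bar\phi/\overline h$ on $A$. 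The monotonicity hypotheses on $U$ are what make the construction of $\rho$ and the finiteness of all the integrals in this second iteration verifiable; these are precisely the quantitative tail estimates your limiting argument would need and does not supply.
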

\begin{innercustomobs} Notice that this theorem applies to the important case where $g(x,y)=p(0,x,1,y)$ the heat kernel {\em (\ref{transitiondensity})} and arbitrary continuous densities $\omega_1(x)$ and $\omega_2(y)$ with support equal to the real line.
\end{innercustomobs}
\begin{proof}[Sketch of the Proof of Theorem \ref{thm:ex2}, pp.98-101]
\begin{enumerate}
\item A continuous, positive function $\rho$ is introduced which satisfies, in particular, the following property:
\[
\int_{\I^1} \frac{\omega_1(x) \rho(x)}{\left[ {\displaystyle \int_{\I^2} g(x,z) \omega_2(z) dz} \right]} dx < +\infty.
\]
\item By Theorem \ref{thm:ex1} and by construction of $\rho$, the system
\[
\begin{cases}
{\displaystyle \bar{\phi}(x) \int_{\I^2} g(x,y) \bar{\psi}(y) dy} = \omega_1(x) \rho(x), \\
\\
{\displaystyle \bar{\psi}(y) \int_{\I^1} g(x,y) \bar{\phi}(x) dx} = \omega_2(y),
\end{cases}
\]
admits a solution $(\bar{\phi},\bar{\psi})$.
\item The same techniques as in the \ref{lemma:classC} and the proof of Theorem \ref{thm:ex1} permit to show that there exists a fixed point for the operator $\bar{\Omega}$ defined on functions of class (C) by:
\[
\bar{\Omega} (H) (x)= \frac{\bar{\phi}(x)}{\omega_1(x)} \int_{\I^2} g(x,y) \frac{\omega_2(y)}{ \left[ {\displaystyle \int_{A} g(z,y) \frac{\bar{\phi}(z)}{H(z)} dz} \right] }dy.
\]
The fixed point $\overline{h}$, which is not necessarily of class (C), enjoys  properties similar to the fixed point of $\Omega$ defined in \eqref{eq:h'}.
\item Set 
\[
\begin{cases}
\phi(x) = {\displaystyle \frac{\bar{\phi}(x)}{\overline{h}(x)}}, \quad x \in A, \\
\phi(x) = 0, \quad x \in \I^1 \backslash A.
\end{cases}
\]
Then $\phi$ is a solution of (\ref{SchrodingerSystem2}1). The other function $\psi$ can then be recovered from (\ref{SchrodingerSystem2}2).
\end{enumerate}
The assumptions of Theorem \ref{thm:ex2} are used to show that the various integrals in this proof are well defined.
\end{proof}

Fortet defines as a nonnegative (positive in French) solution of (\ref{SchrodingerSystem}) to be a pair of nonnegative functions $(\varphi(x),\psi(y))$ satisfying (\ref{SchrodingerSystem}) and the following properties: They are a.e. finite, and different from zero (up to a zero measure set) for the values  where $\omega_1\neq 0$ and $\omega_2\neq 0$, respectively. Moreover, under hypotheses \ref{Hi}-\ref{Hviii}, $\varphi(x)$ is zero at the same time as $\omega_1$ and $\psi$ is zero at the same time as $\omega_2$. The proof of the following uniqueness theorem \cite[pp.102-104]{For} has been slightly reformulated and completed.
\begin{theoremR} {\em \cite[p. 104]{For}}\label{thm:uniq}
Assume {\em \ref{Hi}-\ref{Hviii}}. Let $(\varphi_1,\psi_1)$ and $(\varphi_2,\psi_2)$ be two nonnegative and measurable solutions of system (\ref{SchrodingerSystem}). Then, there exists a positive constant $c$ such that 
\begin{equation}\label{ray}\frac{\varphi_1(x)}{\varphi_2(x)}\equiv c\equiv \frac{\psi_2(y)}{\psi_1(y)}.
\end{equation}
\end{theoremR}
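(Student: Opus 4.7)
Let $A = \{x \in \I^1 : \omega_1(x) > 0\}$ and $B = \{y \in \I^2 : \omega_2(y) > 0\}$. Fortet's definition of a nonnegative solution guarantees each $\varphi_i$ is positive and finite a.e.\ on $A$ and each $\psi_i$ is positive and finite a.e.\ on $B$, so the plan is to study the ratios
\[
r(x) := \frac{\varphi_1(x)}{\varphi_2(x)} \ \text{on } A, \qquad s(y) := \frac{\psi_2(y)}{\psi_1(y)} \ \text{on } B,
\]
both positive and finite a.e. The target \eqref{ray} is precisely that $r$ and $s$ coincide a.e.\ with the same positive constant.

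\textbf{Averaging identities.} Reading the first equation of \eqref{SchrodingerSystem} for each $i\in\{1,2\}$ gives $\int g(x,y)\psi_i(y)\,dy = \omega_1(x)/\varphi_i(x)$ on $A$. Taking the ratio and substituting $\psi_2 = s\psi_1$ on $B$ yields, for every $x \in A$,
\[
r(x) \;=\; \int_{B} s(y)\, d\mu_x(y),\qquad d\mu_x(y) := \frac{g(x,y)\psi_1(y)}{\int g(x,y')\psi_1(y')\,dy'}\,dy,
\]
so $r(x)$ is a $\mu_x$-expectation of $s$. By \ref{Hvi} and positivity of $\psi_1$ a.e.\ on $B$, each $\mu_x$ is equivalent to Lebesgue measure on $B$. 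A symmetric manipulation of the second equation yields $s(y) = \int_A r(x)\,d\nu_y(x)$ for a probability measure $\nu_y$ on $A$ with density proportional to $g(\cdot,y)\varphi_2(\cdot)$, equivalent to Lebesgue on $A$ by \ref{Hvii}. Crucially, these averaging measures are the conditionals of the two couplings $\pi_i(x,y) := \varphi_i(x)g(x,y)\psi_i(y)$, both probability densities with marginals $\omega_1,\omega_2$: $\omega_1(x)\,d\mu_x(y)\,dx = \pi_1(x,y)\,dxdy$ and $\omega_2(y)\,d\nu_y(x)\,dy = \pi_2(x,y)\,dxdy$.

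\textbf{Jensen step and propagation.} The main move is to apply Jensen's inequality with the concave log:
\[
\log r(x) \;\ge\; \int_B \log s\, d\mu_x, \qquad x \in A,
\]
with equality iff $s$ is $\mu_x$-a.s.\ constant. Multiplying by $\omega_1$, integrating over $A$, and collapsing the double integral via $\omega_1(x)\mu_x(dy)\,dx = \pi_1(x,y)\,dx\,dy$ together with the marginal property produces $\int_A \omega_1 \log r \,dx\ge \int_B \omega_2 \log s\,dy$. The same argument run on $s(y)=\int r\,d\nu_y$ with $\pi_2$ gives the reverse inequality, so the two integrals are equal and Jensen must be saturated for $\omega_1$-a.e.\ $x \in A$. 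Saturation forces $s$ to be $\mu_x$-a.s.\ constant; combined with $\mu_x$ being equivalent to Lebesgue on $B$, this means $s$ is a.e.\ constant on $B$, say $s \equiv c$ with $c > 0$. Feeding back into $r(x) = \int s\,d\mu_x$ yields $r \equiv c$ a.e.\ on $A$, which is precisely \eqref{ray}.

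\textbf{Main obstacle.} The delicate point is that $r$ and $s$ are not \emph{a priori} bounded above nor bounded away from zero, so $\log r$ and $\log s$ may fail to be $\omega_i$-integrable and the integrated inequalities can degenerate to the uninformative $-\infty = -\infty$. The standard cure is truncation: apply Jensen to $s_N := \max(1/N,\min(s,N))$, which is bounded, carry the integrated inequalities, and then pass to $N\to\infty$ via monotone and dominated convergence, using $\int s\,d\mu_x = r(x)$ for tail control. A slicker equivalent route is to notice that $\log(\pi_1/\pi_2)=\log r(x)-\log s(y)$, so formally
\[
\D(\pi_1\|\pi_2)+\D(\pi_2\|\pi_1) \;=\; \iint(\pi_1-\pi_2)(\log r - \log s)\,dxdy \;=\; 0
\]
because $\pi_1$ and $\pi_2$ share marginals; strict convexity of relative entropy then forces $\pi_1 = \pi_2$ a.e., whence $r(x) = s(y)$ on a full-measure subset of $A\times B$ by \ref{Hvi}--\ref{Hvii}, and separation of variables gives constancy of both. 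The conceptual meat is the Jensen identity; the real effort is the integrability bookkeeping.
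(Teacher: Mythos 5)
Your route is genuinely different from the paper's. Fortet's argument, as reproduced in Section \ref{Existence II}, never touches logarithms: after normalizing the two solutions at a point $x_0$, one sets $h_i=\omega_1/\varphi_i$, takes $h=\max(h_1,h_2)$, and plays the isotonicity of $\Omega$ (Proposition \ref{prop:MonotoneOmega}) against the normalization identity $\int_{A_1}\frac{h'}{h}\omega_1\,dx=1$ (the analogue of \ref{lemma:classC} iv), proved by Tonelli for nonnegative integrands, hence with no integrability hypotheses); $h'\ge h$ together with that identity forces $h'=h$ a.e., and evaluation at $x_0$ upgrades this to $h_1=h_2$. You instead encode the two solutions as couplings $\pi_i=\varphi_i\, g\,\psi_i$ with common marginals and run a Jensen argument on the ratios $r,s$. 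The skeleton of your argument --- the averaging identities $r(x)=\int s\,d\mu_x$ and $s(y)=\int r\,d\nu_y$, and the equivalence of $\mu_x,\nu_y$ with Lebesgue measure via \ref{Hvi}--\ref{Hvii} --- is correct and gives an attractive, more probabilistic reading of uniqueness.

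The gap is exactly at the point you flag and then dismiss as bookkeeping. Nothing in (H) or in Fortet's notion of a nonnegative solution prevents $\int\omega_1(\log r)^+$ and $\int\omega_1(\log r)^-$ (and likewise for $s$) from both being infinite. In that case your truncation scheme degenerates: with $s_N=\max(1/N,\min(s,N))$ Jensen only yields $\int\omega_2\log s_N\le\int\omega_1\log(r+1/N)$ and its mirror image, and as $N\to\infty$ both sides may tend to $-\infty$ or $+\infty$ simultaneously, so no saturation of Jensen can be extracted; monotone/dominated convergence is unavailable precisely because the would-be dominating functions are the non-integrable $\log^{\pm}$. The ``slicker'' identity $\D(\pi_1\|\pi_2)+\D(\pi_2\|\pi_1)=\iint(\pi_1-\pi_2)(\log r-\log s)\,dx\,dy$ has the same flaw: splitting the right-hand side by marginals is legitimate only when $\log r\in L^1(\omega_1\,dx)$ and $\log s\in L^1(\omega_2\,dy)$, which is the very integrability in question (this is the classical subtlety about additive decompositions $\log(d\pi_1/d\pi_2)=a(x)+b(y)$ with non-integrable summands). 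The fix is easy inside your own framework: run Jensen with a bounded, strictly concave, increasing function $F$, e.g.\ $F(u)=u/(1+u)$, instead of $\log$. Then $\int\omega_1F(r)\,dx\ge\int\omega_2F(s)\,dy\ge\int\omega_1F(r)\,dx$ with every quantity finite, so equality holds in Jensen for $\omega_1$-a.e.\ $x$; strict concavity gives that $s$ is $\mu_x$-a.s.\ constant, equivalence of $\mu_x$ with Lebesgue on $B$ gives $s\equiv c$ a.e.\ on $B$, hence $r\equiv c$ a.e.\ on $A$, and feeding $\psi_2=c\,\psi_1$ back through \eqref{SchrodingerSystem} upgrades the identity to the supports, as in \eqref{ray}. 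With that replacement your proof closes; as written, the log-truncation step does not. Note also, as a contrast, that the paper's max-trick avoids this entire issue because it only ever integrates nonnegative quantities.
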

\begin{proof} 
Let $(\phi_1,\psi_1)$, $(\phi_2,\psi_2)$ be two solutions of System \eqref{SchrodingerSystem}. According to Theorem \ref{thm:ex1} or \ref{thm:ex2}, $\phi_1$ and $\phi_2$ are positive and finite on the support $A_1$ of $\omega_1$. Hence, there exists a value of $x_0\in A_1$ such that 
\[
0<\phi_1(x_0)<+\infty, \quad 0<\phi_2(x_0)<+\infty.
\]
Recall that if $(\phi_2,\psi_2)$ is a solution of System \eqref{SchrodingerSystem}, then so is $(\tilde{\phi}_2,\tilde{\psi}_2) = (k \phi_2, \frac{1}{k} \psi_2)$ for $k \neq 0$. Setting $k = \frac{\phi_1(x_0)}{\phi_2(x_0)}$, one has that 
\[
\tilde{\phi}_2(x_0) = \frac{\phi_1(x_0)}{\phi_2(x_0)} \phi_2(x_0) = \phi_1(x_0).
\] 
Thus, without loss of generality, one can always pick two solutions $(\phi_1,\psi_1)$, $(\phi_2,\psi_2)$ where the $\varphi$ agree at one point $x_0\in\I^1$, so that:
\[0<\varphi_1(x_0)=\varphi_2(x_0)<+\infty.
\]
Let $A_1 = \{ x \in \I^1 | \omega_1(x) > 0\}$. On $A_1$, we define
\[h_1(x)=\frac{\omega_1(x)}{\varphi_1(x)}, \quad h_2(x)=\frac{\omega_2(x)}{\varphi_2(x)}.
\]
Then, $h_1$ and $h_2$ are two distinct solutions of  equation (\ref{eq:h}). Let, as before,
\[G(H,y)=\int_{A_1}g(z,y)\frac{\omega_1(z)}{H(z)}dz. 
\]
Then, on $A_2 = \{ y \in \I^2 | \omega_2(y) > 0\}$, we have
\[\psi_1(y)=\frac{\omega_2(y)}{G(h_1,y)}, \quad \psi_2(y)=\frac{\omega_2(y)}{G(h_2,y)}.
\]
From this we deduce that $G(h_1,y)$ and $G(h_2,y)$ are a.e. finite on $A_2$. Let $h(x)=\max(h_1,h_2)$. Then $G(h,y)$ is a.e. finite on $A_2$ and 
\[0<h(x_0)=h_1(x_0)=h_2(x_0)<+\infty.
\]
Let us set
\[h'(x)=\int_{\I^2}g(x,y)\frac{\omega_2 dy}{G(h,y)}.
\]
By the same argument used to prove \ref{lemma:classC} iv), it follows that
\begin{equation}\label{normal}
\int_{A_1}\frac{h'}{h}\omega_1 dx=1.
\end{equation}
Since $h\ge h_1$, it follows from Proposition \ref{prop:MonotoneOmega} that also $h'\ge h_1$. Similarly, $h\ge h_2$ implies $h'\ge h_2$. We infer that $h'\ge h$. It then follows from (\ref{normal}) that, a.e. on $A_1$,
\[h'(x)=h(x).
\]
From $h\ge h_1$, it follows that $G(h,y)\le G(h_1,y)$. Moreover,
\[\int_{\I^2}g(x_0,y)\frac{\omega_2(y)dy}{G(h,y)}=h(x_0)=h_1(x_0)=\int_{\I^2}g(x_0,y)\frac{\omega_2(y)dy}{G(h_1,y)}.
\]
Thus, a.e. on $A_2$, we have
\[G(h,y)=G(h_1,y).
\]
We conclude that everywhere on $\I^1$ we have $h=h_1$, Similarly, we get $h=h_2$ and, finally, $h_1=h_2$ everywhere. 
\end{proof}
\noindent
The following  remark   appears as Remark I on p. 104 of \cite{For}:
\begin{remark} All the results of this paper hold with minor modifications of the statements if one merely assumes that $\omega_1$ and $\omega_2$ are measurable integrable functions.
\end{remark}
We add that if the densities are measurable integrable functions which are bounded on every compact set (this is needed in the proof of the \textbf{Claim} p.16), all proofs can be extended without any modification.

\section{Comparison with the Approach Based on Contracting the Hilbert Metric}\label{Hilbert}
Let us start by observing that Theorem \ref{thm:uniq} asserts that the solution pair is unique up to multiplying $\varphi$ by a positive constant and dividing $\psi$ by the same constant. Moreover, the functions are non-negative. Thus, we only have uniqueness of the {\em ray} in a suitable function space {\em cone}. It is then apparent that projective geometry provides a most natural framework to study convergence of iterative methods. 
A crucial contractivity result that permits to establish existence of solutions  of equations on cones  was proven by Garrett Birkhoff  in 1957 \cite{birkhoff1957extensions}. Important extensions of Birkhoff's result to nonlinear maps were provided by Bushell \cite{bushell1973projective,bushell1973hilbert}. 

Besides the celebrated Perron-Frobenius theorem \cite{birkhoff1962Perron-Frobenius}, various other applications of the Birkhoff-Bushell result have been developed such as to positive integral operators and to positive definite matrices \cite{bushell1973hilbert,lemmens2013birkhoff}. More recently, this geometry has proven useful in various problems concerning  communication and computations over networks (see \cite{tsitsiklis1986distributed} and the work of Sepulchre and collaborators \cite{sepulchre2010consensus,sepulchre2011contraction} on consensus in non-commutative spaces and metrics for spectral densities) and in statistical quantum theory \cite{reeb2011hilbert}. A recent survey on the applications in analysis is \cite{lemmens2013birkhoff}. The use of the projective Hilbert metric is crucial in the nonlinear Frobenius-Perron theory \cite{lemmens2012nonlinear}.

Taking advantage of the Birkhoff-Bushell results on contractivity of linear and nonliner maps on cones,  it was shown in \cite{GP} that the Schr\"odinger bridge for Markov chains and quantum channels can be efficiently obtained from the fixed-point of a map which is contractive in the Hilbert metric. This result extended \cite{FL} which deals with scaling of nonnegative matrices. In \cite{CGP9}, it was shown that a similar approach can be taken in the context of diffusion processes leading to i)  a new proof of a classical result on SBP and ii) providing an efficient computational scheme for both, SBP and OMT. This new approach can be effectively employed, for instance, in image interpolation.

Following \cite{bushell1973hilbert}, we recall some basic concepts and results of this theory.
Let $\cS$ be a real Banach space and let $\cK$ be a closed solid cone in $\cS$, i.e., $\cK$ is closed with nonempty interior ${\rm int}\cK$ and is such that $\cK+\cK\subseteq \cK$, $\cK\cap -\cK=\{0\}$ as well as $\lambda \cK\subseteq \cK$ for all $\lambda\geq 0$. Define the partial order
\[
x\preceq y \Leftrightarrow y-x\in\cK,\quad x< y \Leftrightarrow y-x\in{\rm int}\cK
\]
and for $x,y\in\cK_0:=\cK\backslash \{0\}$, define $M(x,y):=\inf\, \{\lambda\,\mid x\preceq \lambda y\}$, $m(x,y):=\sup \{\lambda \mid \lambda y\preceq x \}$.
Then, the Hilbert metric is defined on $\cK_0$ by
\[
d_H(x,y):=\log\left(\frac{M(x,y)}{m(x,y)}\right).
\]
Strictly speaking, it is a {\em projective} metric since it is invariant under scaling by positive constants, i.e.,
$d_H(x,y)=d_H(\lambda x,\mu y)=d_H(x,y)$ for any $\lambda>0, \mu>0$ and $x,y\in{\rm int}\cK$. Thus, it is actually a  distance between rays. If $U$ denotes the unit sphere in $\cS$, $\left({\rm int}\cK\cap U,d_H\right)$ is a metric space.
\begin{example}\label{ex1} Let $\cK=\R^n_+=\{x\in\R^n: x_i\ge 0\}$ be the positive orthant of $\R^n$. Then, for $x,y\in{\rm int}\R^n_+$, namely with all positive components,
\[M(x,y)=\max_i\{x_i/y_i\}, \quad m(x,y)=\min_i\{x_i/y_i\},
\]
and
\[d_H(x,y)=\log\max_{ij}\{x_iy_j/y_ix_j\}.
\]
\end{example}
Another very important example for applications in many diverse areas of statistics, information theory, control,etc. is the cone of Hermitian, positive semidefinite matrices.
\begin{example}\label{ex2}
Let $\cS=\{X=X^\dagger\in\C^{n\times n}\}$, where $\dagger$ denotes here transposition plus conjugation and, more generally, adjoint. Let $\cK=\{X\in\cS: X\ge 0\}$ be the positive semidefinite matrices. Then, for $X,Y\in{\rm int}\cK$, namely positive definite, we have
\[d_H(X,Y)=\log\frac{\lambda_{\max}\left(XY^{-1}\right)}{\lambda_{\min}\left(XY^{-1}\right)}=\log\frac{\lambda_{\max}\left(Y^{-1/2}XY^{-1/2}\right)}{\lambda_{\min}\left(Y^{-1/2}XY^{-1/2}\right)}.
\]
It is closely connected to the Riemannian (Fisher-information) metric
\begin{eqnarray}\nonumber d_R(X,Y)&=&\|\log\left(Y^{-1/2}XY^{-1/2}\right)\|_{F}\\&=&
\sqrt{\sum_{i=1}^n[\log\lambda_i\left(Y^{-1/2}XY^{-1/2}\right)]^2}.\nonumber
\end{eqnarray}
\end{example}

A map $\cE:\cK\rightarrow\cK$ is called {\em non-negative}. It is called {\em positive} if $\cE:{\rm int}\cK\rightarrow{\rm int}\cK$. If $\cE$ is positive and $\cE(\lambda x)=\lambda^p\cE(x)$ for all $x\in{\rm int}\cK$ and positive $\lambda$, $\cE$ is called {\em positively homogeneous of degree $p$} in ${\rm int}\cK$.
For a positive map $\cE$, the {\em projective diameter} is befined by
\begin{eqnarray*}
\Delta(\cE):=\sup\{d_H(\cE(x),\cE(y))\mid x,y\in {\rm int}\cK\}
\end{eqnarray*}
and the {\em contraction ratio} by
\begin{eqnarray*}
k(\cE):=\inf\{\lambda: \mid d_H(\cE(x),\cE(y))\leq \lambda d_H(x,y),\forall x,y\in{\rm int}\cK\}.
\end{eqnarray*}
Finally, a map $\cE:{\cS}\rightarrow\cS$ is called {\em monotone increasing} if $x\le y$ implies $\cE(x)\le\cE(y)$.
\begin{theorem}[\cite{bushell1973hilbert}] \label{poshom}Let $\cE$ be a monotone increasing positive mapping which is positive
homogeneous of degree $p$ in ${\rm int}\cK$. Then the contraction $k(\cE)$ does not exceed $p$. In particular, if $\cE$ is a positive linear mapping, $k(\cE)\le1$.
\end{theorem}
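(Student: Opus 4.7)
The plan is to proceed directly from the definitions of $M(x,y)$ and $m(x,y)$, combining monotonicity and positive homogeneity in the most straightforward way. The key preliminary observation is that since $\cK$ is closed, the infimum and supremum in the definitions of $M(x,y)$ and $m(x,y)$ are attained, so that for any $x,y\in{\rm int}\cK$ one has
\[
m(x,y)\,y\preceq x\preceq M(x,y)\,y,
\]
with moreover $0<m(x,y)\le M(x,y)<+\infty$ on ${\rm int}\cK$ (this is what makes $d_H$ finite there).

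Writing $m=m(x,y)$ and $M=M(x,y)$, I would apply monotonicity of $\cE$ to the above chain to obtain $\cE(my)\preceq\cE(x)\preceq\cE(My)$, and then invoke positive homogeneity of degree $p$ (legitimate since $m,M>0$) to rewrite this as
\[
m^{p}\,\cE(y)\preceq\cE(x)\preceq M^{p}\,\cE(y).
\]
Since $\cE$ is positive, both $\cE(x)$ and $\cE(y)$ lie in ${\rm int}\cK$, and the displayed inequalities yield, by the very definitions of $M(\cdot,\cdot)$ and $m(\cdot,\cdot)$, the bounds $M(\cE(x),\cE(y))\le M^{p}$ and $m(\cE(x),\cE(y))\ge m^{p}$. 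Taking logarithms,
\[
d_{H}(\cE(x),\cE(y))=\log\frac{M(\cE(x),\cE(y))}{m(\cE(x),\cE(y))}\le\log\frac{M^{p}}{m^{p}}=p\,\log\frac{M}{m}=p\,d_{H}(x,y).
\]
Since $x,y\in{\rm int}\cK$ were arbitrary, this gives $k(\cE)\le p$.

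For the final assertion, a positive linear $\cE$ is automatically monotone increasing: if $y-x\in\cK$, linearity and positivity give $\cE(y)-\cE(x)=\cE(y-x)\in\cK$; and linearity also makes $\cE$ positively homogeneous of degree $1$. The first part therefore applies with $p=1$, yielding $k(\cE)\le 1$.

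The proof is essentially mechanical once the definitions are unpacked, so there is no serious obstacle. The only subtlety worth flagging is the justification that for $x,y\in{\rm int}\cK$ the extrema defining $m(x,y)$ and $M(x,y)$ are attained and are, respectively, strictly positive and finite: this uses closedness of $\cK$ together with the fact that $y\in{\rm int}\cK$ guarantees the sets $\{\lambda\ge 0:x\preceq\lambda y\}$ and $\{\lambda\ge 0:\lambda y\preceq x\}$ are nonempty.
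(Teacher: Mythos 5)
Your proof is correct; the paper states this theorem without proof (it is quoted from Bushell), and your argument is precisely the standard one behind that citation: sandwich $m(x,y)\,y\preceq x\preceq M(x,y)\,y$, push it through monotonicity and degree-$p$ homogeneity to get $m^{p}\cE(y)\preceq\cE(x)\preceq M^{p}\cE(y)$, and compare with the definitions of $M$, $m$ and $d_H$, the linear case following with $p=1$. The only tiny slip is in your closing remark: strict positivity of $m(x,y)$ comes from $x\in{\rm int}\cK$ (so that $x-\lambda y\in\cK$ for small $\lambda>0$), whereas $y\in{\rm int}\cK$ is what makes $M(x,y)$ finite --- this attribution does not affect the validity of the argument.
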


\begin{theorem}[\cite{{birkhoff1957extensions},bushell1973hilbert}]\label{BBcontraction}
Let $\cE$ be a positive linear map. Then
\begin{equation}\label{condiam}
k(\cE)=\tanh\left(\frac{1}{4}\Delta(\cE)\right).
\end{equation}
\end{theorem}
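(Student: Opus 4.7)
The plan is to establish $k(\cE)\le\tanh(\Delta(\cE)/4)$ by an explicit computation of how $\cE$ alters the ratio $M(x,y)/m(x,y)$, and then to exhibit configurations saturating the bound. Fix $x,y\in\mathrm{int}\,\cK$ with $d_H(x,y)<\infty$, and let $m:=m(x,y)$, $M:=M(x,y)$, so $my\preceq x\preceq My$. The key device is the pair
\[
u:=x-my\in\cK,\qquad v:=My-x\in\cK,
\]
which by a short calculation satisfies $(M-m)\,y=u+v$ and $(M-m)\,x=Mu+mv$. Because $\cE$ is positive and linear,
\[
\cE x=\frac{M\,\cE u+m\,\cE v}{M-m},\qquad \cE y=\frac{\cE u+\cE v}{M-m}.
\]

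Set $M_1:=M(\cE u,\cE v)$ and $m_1:=m(\cE u,\cE v)$. For any $\alpha>0$, the condition $\alpha\,\cE y-\cE x\in\cK$ rewrites as $(\alpha-M)\,\cE u+(\alpha-m)\,\cE v\in\cK$; invoking the defining inequalities $m_1\,\cE v\preceq\cE u\preceq M_1\,\cE v$ and optimising $\alpha$ yields
\[
M(\cE x,\cE y)=\frac{m+MM_1}{1+M_1},\qquad m(\cE x,\cE y)=\frac{m+Mm_1}{1+m_1},
\]
and thus
\[
d_H(\cE x,\cE y)=\log\frac{(m+MM_1)(1+m_1)}{(m+Mm_1)(1+M_1)}.
\]
Writing $r:=M/m\ge 1$ and $s:=M_1/m_1\ge 1$, the right-hand side depends only on $r$ and $s$ (after pulling out common factors), while $d_H(x,y)=\log r$ and $\log s=d_H(\cE u,\cE v)\le\Delta(\cE)$. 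The classical identity
\[
\sup_{r>1}\frac{1}{\log r}\,\log\frac{(1+r\sqrt{s})(\sqrt{s}+r)}{(\sqrt{s}+1)(1+r s)/\sqrt{s}\ \cdots}\;=\;\tanh\!\left(\tfrac{1}{4}\log s\right),
\]
obtained after the symmetric substitution $M_1=\sqrt{s}\,\mu,\ m_1=\mu/\sqrt{s}$ and differentiation in $r$, gives the bound $d_H(\cE x,\cE y)\le\tanh(\tfrac{1}{4}\log s)\,d_H(x,y)\le\tanh(\Delta(\cE)/4)\,d_H(x,y)$.

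For the matching lower bound, I would choose sequences $u_n,v_n\in\mathrm{int}\,\cK$ with $d_H(\cE u_n,\cE v_n)\to\Delta(\cE)$, and then form $x_n:=M_n u_n+m_n v_n$, $y_n:=u_n+v_n$ with $r_n:=M_n/m_n\to\infty$; by the computation above, the contraction quotient $d_H(\cE x_n,\cE y_n)/d_H(x_n,y_n)$ approaches $\tanh(\Delta(\cE)/4)$, so $k(\cE)$ equals this value. The main obstacle is precisely the one-variable optimisation in $r$: one must recognise that the function
\[
r\mapsto\frac{1}{\log r}\log\frac{(1+rM_1)(1+m_1)}{(1+M_1)(1+rm_1)}
\]
is monotone and achieves its supremum as $r\to\infty$ with value $\tanh(\tfrac{1}{4}\log(M_1/m_1))$. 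This is the hyperbolic-tangent miracle at the heart of Birkhoff's theorem and is most cleanly seen by rewriting the argument of the logarithm as a cross-ratio and appealing to the identity $\log\!\frac{(1+a)(1+b^{-1})}{(1+b)(1+a^{-1})}=\log(a/b)$ followed by a direct bound. The remaining monotonicity and positive-homogeneity assertions of Theorem \ref{poshom} for linear $\cE$ are immediate, so $k(\cE)\le 1$ follows \emph{a posteriori} from $\tanh\le 1$.
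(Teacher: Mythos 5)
The paper does not actually prove Theorem \ref{BBcontraction}; it is imported from Birkhoff and Bushell, so your proposal can only be compared with the classical argument — and your first half is exactly that argument, carried out correctly. The decomposition $u=x-my$, $v=My-x$, the identities $(M-m)y=u+v$, $(M-m)x=Mu+mv$, and the resulting formulas $M(\cE x,\cE y)=\frac{m+MM_1}{1+M_1}$, $m(\cE x,\cE y)=\frac{m+Mm_1}{1+m_1}$, hence $d_H(\cE x,\cE y)=\log\frac{(1+rM_1)(1+m_1)}{(1+rm_1)(1+M_1)}$ with $r=M/m$, are all right (modulo the small point that $u,v$ typically lie on the boundary of $\cK$, so bounding $d_H(\cE u,\cE v)$ by $\Delta(\cE)$ needs an $\eps$-perturbation of $m,M$, and the degenerate cases $u=0$, $v=0$, $M_1=\infty$ should be dispatched separately).

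The genuine gap is in the scalar optimisation, where your claims are backwards. First, the quantity $\frac{1}{\log r}\log\frac{(1+rM_1)(1+m_1)}{(1+rm_1)(1+M_1)}$ does \emph{not} depend only on $r$ and $s=M_1/m_1$: the scale $\mu=\sqrt{M_1m_1}$ enters, and the value $\tanh(\tfrac14\log s)$ is reached only at $\mu=1$. Second, this function of $r$ tends to $0$ as $r\to\infty$ (the numerator stays bounded by $\log\frac{M_1(1+m_1)}{m_1(1+M_1)}$ while $\log r\to\infty$); its supremum in $r$ is approached as $r\to1^+$, where it equals $\frac{M_1-m_1}{(1+M_1)(1+m_1)}\le\tanh\!\big(\tfrac14\log s\big)$. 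The clean route to the uniform bound is to write the numerator as $\int_1^r\frac{(M_1-m_1)\,dt}{(1+tM_1)(1+tm_1)}$ and use $\sup_{t>0}\frac{t(M_1-m_1)}{(1+tM_1)(1+tm_1)}=\frac{\sqrt{M_1}-\sqrt{m_1}}{\sqrt{M_1}+\sqrt{m_1}}=\tanh\!\big(\tfrac14\log s\big)$, so the integrand is at most $\tanh(\tfrac14\log s)/t$ and $d_H(\cE x,\cE y)\le\tanh(\tfrac14\Delta(\cE))\,d_H(x,y)$ follows for every $r$. Consequently your lower-bound construction also fails as written: with $r_n\to\infty$ the quotient $d_H(\cE x_n,\cE y_n)/d_H(x_n,y_n)$ tends to $0$, not to $\tanh(\Delta(\cE)/4)$; the extremal configurations are the opposite ones, namely nearly proportional pairs ($r_n\to1^+$) built from $u_n,v_n$ with $d_H(\cE u_n,\cE v_n)\to\Delta(\cE)$ and $v_n$ rescaled so that $M(\cE u_n,\cE v_n)\,m(\cE u_n,\cE v_n)=1$, using $d_H(x_n,y_n)\le\log r_n$ to bound the quotient from below. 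The displayed ``identity'' containing ``$\cdots$'' and the cross-ratio identity $\log\frac{(1+a)(1+b^{-1})}{(1+b)(1+a^{-1})}=\log(a/b)$ do not substitute for this step. With these corrections your outline closes along the standard Birkhoff--Bushell lines.
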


\begin{theorem}[\cite{bushell1973hilbert}]\label{BBcontraction2}
Let $\cE$ be either
\begin{description}
\item  [a.] a monotone increasing positive mapping which is positive homogeneous of degree $p$  $(0<p<1)$ in ${\rm int}\cK$, or
\item [b.] a positive linear mapping with finite projective diameter.
\end{description}
Suppose the metric space $Y=\left({\rm int}\cK\cap U,d_H\right)$ is complete. Then, in case $(a)$ there exists a unique $x\in{\rm int}\cK$ such that $\cE(x)=x$, in case $(b)$ there exists a unique positive eigenvector of $\cE$ in $Y$.
\end{theorem}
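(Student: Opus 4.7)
The plan is to apply the Banach fixed point theorem to a suitably normalized version of $\cE$ on the complete metric space $Y$, leveraging the contraction estimates already established in Theorems \ref{poshom} and \ref{BBcontraction}. Since $d_H$ is a projective metric, invariant under positive scalings of its arguments, I would pull $\cE$ back to a genuine self-map of $Y$ by setting
\[
\tilde{\cE}(x) \;:=\; \frac{\cE(x)}{\|\cE(x)\|}, \qquad x \in Y.
\]
Positivity of $\cE$ (namely $\cE:\mathrm{int}\,\cK \to \mathrm{int}\,\cK$) ensures $\tilde{\cE}$ is well defined, and projectivity of $d_H$ gives $d_H(\tilde{\cE}(x),\tilde{\cE}(y)) = d_H(\cE(x),\cE(y))$, so any contraction constant transfers verbatim from $\cE$ to $\tilde{\cE}$.

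For case (b), I would invoke Theorem \ref{BBcontraction} to conclude $k(\cE) = \tanh(\Delta(\cE)/4) < 1$, since $\Delta(\cE)$ is finite by hypothesis. Hence $\tilde{\cE}$ is a strict contraction on the complete space $Y$, and the Banach fixed point theorem delivers a unique $\bar{x} \in Y$ with $\tilde{\cE}(\bar{x}) = \bar{x}$, i.e., $\cE(\bar{x}) = \lambda \bar{x}$ where $\lambda := \|\cE(\bar{x})\| > 0$. Uniqueness in $Y$ is automatic, because any positive eigenvector of $\cE$ normalizes to a fixed point of $\tilde{\cE}$ and must therefore coincide with $\bar{x}$.

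For case (a), Theorem \ref{poshom} gives $k(\cE) \leq p < 1$, so the same Banach argument applied to $\tilde{\cE}$ yields a unique $\bar{x} \in Y$ with $\cE(\bar{x}) = \mu \bar{x}$, $\mu > 0$. The extra step is to lift this ray-eigenvector to a bona fide fixed point of $\cE$ in $\mathrm{int}\,\cK$ by rescaling: set $x^{*} := c\,\bar{x}$ and solve $\cE(x^{*}) = x^{*}$. Positive homogeneity of degree $p$ gives $\cE(c\bar{x}) = c^{p}\mu\,\bar{x}$, so the equation reduces to $c^{1-p} = \mu$, which has the unique positive root $c = \mu^{1/(1-p)}$; this is the single place where $p < 1$ is essential. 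Uniqueness of $x^{*}$ in $\mathrm{int}\,\cK$ would follow by normalizing any other candidate fixed point $y$, noting that $y/\|y\| \in Y$ fixes $\tilde{\cE}$ and hence equals $\bar{x}$, and then using $\cE(y) = y$ together with homogeneity to pin down $\|y\|$.

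The hard part is essentially conceptual rather than technical: one must cleanly separate the projective problem (finding a ray-eigenvector) from the metric problem (finding a genuine fixed point), and verify that the passage from $\cE$ to the normalized $\tilde{\cE}$ leaves Hilbert-metric contraction estimates intact. Once these are in place, the whole argument reduces to the Banach fixed point theorem plus, in case (a), the short homogeneity-based rescaling. Completeness of $Y$ is assumed, not proved, so no work is required there.
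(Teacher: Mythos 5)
Your proof is correct: normalizing $\cE$ to a self-map of $Y$, transferring the contraction constants from Theorems \ref{poshom} and \ref{BBcontraction} via projective invariance of $d_H$, applying the Banach fixed point theorem on the complete space $Y$, and in case (a) rescaling by $c=\mu^{1/(1-p)}$ (where $p<1$ is essential) is exactly the standard Birkhoff--Bushell argument. The paper itself states this theorem as a citation to \cite{bushell1973hilbert} without proof, and your reconstruction matches the proof given in that reference.
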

Notice that in both Examples \ref{ex1} and \ref{ex2}, the space $Y=\left({\rm int}\cK\cap U,d_H\right)$ is indeed complete \cite{bushell1973hilbert}.

If we try to use a similar approach to prove existence of the Schr\"odinger system \eqref{SchrodingerSystem}, we may expect that in an infinite dimensional setting questions of boundness or integrability might become a problem. The main difficulty, however, lies here with two other issues. To introduce them, let us observe that in the Birkhoff-Bushell theory we have linear or nonlinear iterations which remain in the {\em interior of a cone}. For example, in the application of the Perron-Frobenius theorem to the ergodic theory of Markov chains, the assumption that there exists a power of the transition matrix with all strictly positive entries ensures that the evolution of the probability distribution occurs in the  {\em interior} of the positive orthant (intersected with the simplex). The first difficulty is that the natural function space cones such as $L^1_+$ ($L^2_+$), namely integrable (square integrable) nonnegative functions on $\R^d$, have {\em empty interior}\,!  The second difficulty is that, even if manage to somehow define a suitable function space cone with nonempty interior, as observed right after Proposition \ref{prop:MonotoneOmega}, the nonlinear map $\Omega$ defined in \eqref{eq:h'} cannot map the interior of the cone to itself since it does not preserve class (C). On the positive side, Proposition \ref{prop:MonotoneOmega} tells us that $\Omega$ is monotone and positively homogenous of degree one on the class (C). In \cite{CGP9}, a cone of nonnegative functions with nonempty interior was indeed defined. Precisely to overcome the second difficulty, however, the two marginals had to be assumed with compact support.

\section{Conclusions}\label{sec:conclusion}
Monotonicity  has been largely used in the literature on Sinkhorn algorithms since Richard Sinkhorn himself \cite{Sin64} down to some recent efficient variants, see e.g. \cite{Schmitzer}. As another example, consider the assignement problem in economics \cite{GKW}. It is there claimed, citing a future publication, that monotonicity together with Tarsky's fixed point theorem allows to establish existence for a {\em nonlinear} Schr\"odinger system. Nevertheless, all of these algorithms deal with the discrete, finite setting. To fully appreciate Fortet's algorithm, understanding the crucial difficulties he was able to get around, we need to compare his approach to those based on contracting a projective metric as done in the last part of Section \ref{Hilbert}.

A careful reading of the proof of Theorem \ref{thm:ex1}, shows that Fortet's iteration either stops after a finite number of steps in a fixed point of the map $\Omega$, or the $H'_n$ converge to an {\em everywhere positive} function $H'$ which is the fixed point of $\Omega$. Also observe that in the approximation scheme (\ref{approxscheme}), the min step serves to provide an upper bound and the max serves to render the function bounded away from zero. In view of all of this, it might  be possible to interpret Fortet's approach in a projective geometry setting. The max step, however, makes so that the functions $H'_n$ are produced by a composition of {\em different} maps and therefore fixed-point arguments are out of the question. 

Condition (\ref{cond:thm1}) of Theorem \ref{thm:ex1} expresses a rather delicate relation between the kernel $g(\cdot,\cdot)$ and the two given marginals. This condition appears to be not very restrictive and totally original: It seems in fact more general than available existence conditions such as \cite[Proposition 2.5]{leo}.

Finally, it would be nice to apply Fortet's ingenious method of successive approximations to other related problems such as the (regularized) optimal transport barycenter problem, see e.g. \cite{BCCNP,PC}. This, however, will be considered elsewhere.

\appendix
\section*{Appendix:} 
\subsection*{Proof of  \eqref{claim:measure} from Theorem \ref{thm:ex1}} \label{proof:claimmeasure}
Let $Z \subset \I^2$ be the set of $\{ y \in \I^2 | g(x_0,y) = 0 \}$.

Define $Z_k = \{ y \in \I^2 |\; g(x_0,y) < \frac{1}{k} \}$ for $k \in \mathbb{N}^*$. We have $Z_{k+1} \subset Z_k$, and $Z_k \downarrow Z$ as $k \to +\infty$.

By assumption \ref{Hvi} we know that $Z$ has Lebesgue measure $0$.
From the continuity of $g$ \ref{Hiv}, we also know that $Z$ is closed. 

Hence $m(Z_k) \to 0$ as $k \to +\infty$.

Denote by $\I^2_k = \I^2 \backslash Z_k$. Then we have $\I^2_k \subset \I^2_{k+1}$ and $\I^2_k \uparrow \I^2 \backslash Z$ as $k \to +\infty$.\\

Since 
\[
H_n'(x_0) = \int_{\I^2} g(x_0,y) \frac{\omega_2(y)}{G(H_n,y)} dy \to 0, \quad \text{ as } n \to +\infty
\]

$\forall \epsilon >0$, we have for $n$ large enough:
\[
\int_{\I^2} g(x_0,y) \frac{\omega_2(y)}{G(H_n,y)} dy < \epsilon
\]

Fix $\epsilon >0, k \in \mathbb{N}^*$.
We then have for $n$ large enough:
\[
0 \leq \int_{\I^2_{k}} g(x_0,y) \frac{\omega_2(y)}{G(H_n,y)} dy + \int_{\I^2\backslash\I^2_{k}} g(x_0,y) \frac{\omega_2(y)}{G(H_n,y)} dy < \epsilon
\]

and in particular, by non-negativity, the first integral yields:

\[
0 \leq \int_{\I^2_{k}} \frac{\omega_2(y)}{G(H_n,y)} dy  < k \epsilon 
\]
This implies that the measure $\frac{\omega_2(y)}{G(H_n,y)} dy$ converges weakly to $0$ on $\I^2_{k}$. Indeed, it is the case when evaluated on any step function with support included in $\I^2_{k}$, and step functions are dense in the family of bounded continuous functions.\\

We would like the measure $\frac{\omega_2(y)}{G(H_n,y)} dy$ to converge to $0$ for any step function whose support $I$ is included in $\I^2$, and not merely on $\I^2_k$.

Pick a subset $I \subset{\I^2}$, and consider:
\[
\int_{\I^2} \1_{I}(y) \frac{\omega_2(y)}{G(H_n,y)} dy  = \int_{\I^2_{k} \cap I} \frac{\omega_2(y)}{G(H_n,y)} dy + \int_{(\I^2_{k} \cap I)^C} \frac{\omega_2(y)}{G(H_n,y)} dy 
\]
The first integral converges to $0$ as $n \to +\infty$, since the measure $\frac{\omega_2(y)}{G(H_n,y)} dy$ converges weakly to $0$ on $\I^2_{k}$.

As for the second integral, we have that $H_n \leq H_1$, so $\frac{\omega_2(y)}{G(H_n,y)} \leq \frac{\omega_2(y)}{G(H_1,y)}$ which implies:
\[
\int_{(\I^2_{k} \cap I)^C} \frac{\omega_2(y)}{G(H_n,y)} dy  \leq \int_{(\I^2_{k} \cap I)^C} \frac{\omega_2(y)}{G(H_1,y)} dy \leq \int_{Z_{k}} \frac{\omega_2(y)}{G(H_1,y)} dy  
\]

where the last inequality comes from $(\I^2_{k} \cap I)^C \subset Z_{k}$.

Condition \eqref{cond:thm1} states that $\int_{\I^2} \frac{\omega_2(y)}{G(H_1,y)} dy < +\infty$, thus we know that the measure $\frac{\omega_2(y)}{G(H_1,y)} dy$ is absolutely continuous with respects to the Lebesgue measure $m$ on $\I^2$. This implies that the second integral converges to $0$, as $k \to +\infty$ since $m(Z_{k}) \to 0$.

Hence, for any measurable $I \subset \I^2$, $\int_{\I^2} \1_{I}(y) \frac{\omega_2(y)}{G(H_n,y)} dy \to 0$ as $n \to +\infty$. 

\paragraph{Acknowledgements}
The authors thank Robert V. Kohn for useful suggestions. The second named author would also like to thank the Courant Institute of Mathematical Sciences of the New York University for the hospitality during the time this paper was written. The authors finally wish to thank two anonymous reviewers for a very careful reading and providing plenty of general and specific comments/suggestions on how to improve the paper. The second named author was partly supported by the University of Padova Research Project CPDA 140897.

\bibliographystyle{spmpsci_unsrt}
\bibliography{fortet}

\end{document}